\DeclareMathOperator*{\argmin}{arg\,min}
 \DeclareMathOperator{\ran}{ran}
\pgfplotsset{compat=1.8}
\newcommand*\bigcdot{\mathpalette\bigcdot@{.6}}
\newcommand*\bigcdot@[2]{\mathbin{\vcenter{\hbox{\scalebox{#2}{$\m@th#1\bullet$}}}}}
\theoremstyle{theorem}
\newtheorem{theorem}{Theorem}[section]
\newtheorem{lemma}[theorem]{Lemma}
\newtheorem{proposition}[theorem]{Proposition}
\theoremstyle{definition}
\newtheorem{definition}[theorem]{Definition}
\newcommand{\R}{\mathbb{R}}
\newcommand{\Z}{\mathbb{Z}}
\newcommand{\N}{\mathbb{N}}
\newcommand{\dd}{\mathrm{d}}
\newcommand{\sph}{\mathbb S}
\newcommand{\set}[1]{\{#1\}}
\newcommand{\abs}[1]{\lvert#1\rvert}
\newcommand{\norm}[1]{\lVert#1\rVert}
\newcommand{\n}{\omega}
\newcommand{\inner}[2]{{#1}\bigcdot {#2}}
\newcommand\ip[2]{\left\langle#1,#2\right\rangle}
\newcommand{\kl}[1]{\left(#1\right)}
\newcommand{\noisenum}{{\boldsymbol \xi}}
\newcommand{\fnum}{{\boldsymbol f}}
\newcommand{\pnum}{{\boldsymbol p}}
\newcommand{\qnum}{{\boldsymbol q}}
\newcommand{\gnum}{{\boldsymbol g}}
\newcommand{\unum}{{\boldsymbol u}}
\newcommand{\Lnum}{\boldsymbol{L}}
\newcommand{\Cnum}{\boldsymbol{C}}
\newcommand{\Dnum}{\boldsymbol{D}}
\newcommand{\Inum}{\boldsymbol{I}}
\newcommand{\Pnum}{\boldsymbol{P}}
\newcommand{\Tnum}{\boldsymbol{\Phi}}
\newcommand{\RR}{R_0}
\newcommand{\pp}{z}
\newcommand{\Do}{\mathbf{D}}
\newcommand{\RC}{\mathcal{C}}
\newcommand{\cone}{S}
\newcommand{\cset}{M}
\newcommand{\f}{f}
\newcommand{\g}{g}
\newcommand{\U}[1]{U\left(#1\right)}
\newcommand{\Un}[1]{U_n(#1)}
\newcommand{\ds}{\dd S}
\newcommand{\Qm}{ Q}
\newcommand{\edot}{\,\cdot\,}
\newcommand{\lk}{{\ell,k}}
\newcommand{\Cn}{C_\ell^{(n-2)/2}}
\newcommand{\K}{K}
\newcommand{\F}{F}
\newcommand{\la}{\lambda}
\newcommand{\La}{\Lambda}
\newcommand{\al}{\alpha}
\newcommand{\ph}{\varphi}
\newcommand{\bkl}[1]{\left(#1\right)}
\newcommand{\xx}{D_{\mathbb X}}
\newcommand{\yy}{D_{\mathbb Y}}
\newcommand{\XX}{\mathbb X}
\newcommand{\YY}{\mathbb Y}
\newcommand{\tik}{\Phi}
\newcommand{\reg}{\Psi}
\newcommand{\dom}[1]{\Delta(#1)}
\colorlet{lred}{red!40}
\colorlet{lblue}{blue!40}
\colorlet{lgreen}{green!40}
\numberwithin{figure}{section}
\numberwithin{equation}{section}
\title{Variational  regularization  of the weighted conical Radon transform}
\date{August 4, 2018}
\author{Markus Haltmeier}
\affil{University of Innsbruck, Department of Mathematics\authorcr
Technikerstrasse 13, A-6020 Innsbruck, Austria\authorcr
E-mail: \texttt{markus.haltmeier@uibk.ac.at}}
\author{Daniela Schiefeneder}
\affil{University of Innsbruck, Department of Mathematics\authorcr
Technikerstrasse 13, A-6020 Innsbruck, Austria\authorcr
E-mail: \texttt{daniela.schiefeneder@uibk.ac.at}}
\begin{document}
\maketitle

\begin{abstract}
Recovering a function from integrals over conical surfaces recently got significant interest. It is relevant for emission tomography with Compton cameras and other imaging applications. In this paper, we consider the weighted conical Radon transform with vertices on the sphere.  Opposed to previous works on conical Radon transform, we allow a general weight depending on the distance of the integration point from the vertex. As first main result, we show uniqueness of inversion for that transform. To stably invert the weighted conical Radon transform, we use general convex variational regularization. We present numerical minimization schemes based on the Chambolle-Pock primal dual algorithm. Within this framework, we compare various regularization terms, including non-negativity constraints, $H^1$-regularization and total variation regularization. Compared to standard quadratic Tikhonov regularization, TV-regularization is demonstrated to significantly increase the reconstruction quality from conical Radon data.

\medskip \noindent \textbf{Keywords:}
Conical Radon transform,  convex regularization, total variation,
 solution uniqueness, image reconstruction, iterative minimization.

\medskip \noindent \textbf{AMS subject classifications:}
44A12, 45D05, 92C55.

\end{abstract}

\section{Introduction}
\label{sec:intr}

In this paper, we study the problem of reconstructing  a function
from its weighted conical Radon transform
\begin{equation*}
\RC f (\pp,\psi)
= \int_{\cone(\pp,\psi)} f(x)\,\U{\norm{x-\pp}}\ds (x) \,.
\end{equation*}
Here $\cone(\pp,\psi) \subseteq \R^n$ is a conical surface with
vertex $\pp $ on the unit sphere $\sph^{n-1}$, semi-axis
$-\pp$ and half-opening angle $\psi\in [0, \pi/2]$,
$U$ is a radial weight function,  and $\ds $ the standard
surface measure.

There are various variants of conical Radon transforms that differ in  the
restrictions of  the opening angle, the axis  direction or  the locations of the vertices of the conical surfaces, and  might include different weights.
Many  of such versions  of  conical Radon transforms have been   studied in the literature, see    \cite{allmaras2013,Maxim2009,palamodov2017reconstruction,ambartsoumian2016numerical,CreBon94,Hal14a,BasZenGul98,gouia2014exact,JunMoo15,Par00,Smi05,kuchment2016three,TomHir02,maxim2014filtered,MorEtAl10,terzioglu2018compton,moon2017analytic,smith2011line}.
For the special case of a  weight $U(r) = r^m$ with $m\in \Z$,  our  transform $\RC f$
has  been studied in \cite{schiefeneder2017radon}  in general dimension, and in \cite{haltmeier2017inversion} for the weight
$U(r)=e^{-\la r}$  in dimension   $n=2$.  Some  other works (for example \cite{kuchment2016three,moon2017analytic,palamodov2017reconstruction})
consider weights of the form   $U(r) = r^m$  for overdetermined conical Radon transforms.
The ill-posed character  is microlocally analyzed in \cite{zhang2018artifacts} in dimension  $n=2$ for the case $U(r)=1$, where also artifacts are characterized.  The  use of a general weight  for the conical Radon transform has not  been previously studied for any  variant of the conical Radon transform. 

The inversion of conical Radon transforms arises in single photon emission tomography (SPECT) using Compton
cameras~\cite{rogers2004compton,TodNigEve74}, as well as in other  applications such as single
scattering optical tomography~\cite{florescu2011inversion}. In the context of SPECT with Compton
cameras, the general radial weight that we consider in  this  paper allows to include the physically relevant effect of attenuation of photons.
In the context of SPECT using  the classical  Anger cameras where the exponential and the attenuated Radon transform appear this issue is well investigated; see for example \cite{bellini79compensation,novikov2002inversion,hawkins1988circular,inouye1989image,puro2001cormack,tretiak80exponential,markoe1985elementary,clough1983attenuated,natterer2001inversion,kunyansky2001new}.
For the conical Radon transform, attenuation is investigated much less and we are only aware of the works \cite{gouia2017,haltmeier2017}
considering the special case $U(r)=e^{-\la r}$.

The main contributions of this article are as follows. First, we  derive a  uniqueness result for $\RC$ extending the result of \cite{schiefeneder2017radon} to general weights.
 Second, in  order to address the ill-posedness of inversion of the weighted conical Radon transform, we apply  convex  variational  regularization
 \begin{equation*}
\tik_{\g, \alpha}(\f)  \coloneqq
\frac{1}{2} \norm{\RC \f - \g}_{L^2}^2
+ \alpha \reg( \f ) \to \min_{\f}  \,.
\end{equation*}
Here $\alpha$ denotes a positive regularization parameter and $\reg$ is a convex regularizer that allows to
include positivity as well as total variation (TV) regularization.
 For the implementation of the numerical minimization, we apply the  primal dual algorithm of   Chambolle and Pock \cite{chambolle2011}. To the best of our knowledge, neither  general convex variational regularization nor the
 Chambolle-Pock algorithm have previously been studied for the  conical Radon
 transform.  We present numerical studies comparing various regularizers.
 The presented  results clearly suggest  that TV-regularization outperforms the other regularizers
 for the considered phantom type.

\subsection*{Outline}

This paper is organized as follows.  In Section~\ref{sec:cone}, we define the weighted conical Radon
transform  and study its mathematical properties. In Section~\ref{sec:convex}, we consider  its variational
regularization. Additionally, we derive the numerical minimization algorithm and present numerical
results for $L^2$-regularization, $H^1$-regularization  and TV-regularization, all with and without
positivity constraint. The paper concludes  with a short summary and outlook presented in
Section~\ref{sec:conclusion}.

\section{The conical Radon transform}
\label{sec:cone}

Throughout this paper,  $U\in C^\infty([0,\infty),\R)$ denotes a  given function
with  $U(r) \geq 0$ for  all $r\geq 0$.
For  $\pp \in \sph^{n-1}$  and $\psi\in [0, \pi/2]$, we denote by
\begin{equation} \label{eq:cone}
	\cone(\pp,\psi)
	 =\{ \pp + r  \n \mid r \geq 0
	\text{ and }
	\n \in \sph^{n-1}
	\text{ with }  - \inner{\n}{\pp} =
	\cos\psi \}
\end{equation}
the surface  of a right circular half cone in $\R^n$ with
vertex $\pp$, central axis   $-\pp$,
and   half opening angle $\psi$.
We write  $\xx \coloneqq C^\infty_0(B_1(0))$
for the space of smooth functions  with compact support in
the   ball $B_1(0)$, and $\yy \coloneqq C^\infty(\sph^{n-1}\times [0,\pi/2])$.
The results in this section extend the results
of~\cite{schiefeneder2017radon}  for the conical Radon transform with 
weights $U(r) = r^m$ to the case of general radial weights.

\subsection{Definition and basic properties}

We first define the transform studied in this paper.

\begin{definition}[Weighted conical Radon transform]
For  $\f \in \xx$ we  define   the  weighted conical Radon transform
(with vertices on the sphere, orthogonal axis and weighting function $U$) by
\begin{equation}\label{eq:crt1}
\RC \f   \colon \sph^{n-1}  \times [0, \pi/2] \to \R
\colon
  (\pp, \psi) \mapsto   \int_{\cone(\pp,\psi)} f(x)\,\U{\norm{x-\pp}}\ds (x)  \,.
\end{equation}
\end{definition}
The weighted conical Radon transform integrates a function
$\f \in \xx$ supported inside the unit  ball  over cones
with vertices on the unit sphere $\sph^{n-1} = \set{x \in \R^n \mid  \norm{x} =1}$ and central axis orthogonal to $\sph^{n-1}$ pointing to the interior
of the sphere. Alternative expressions for the conical Radon transform
that will be helpful for our analysis are derived next.

\begin{proposition}\label{prop:R}
Suppose $f\in \xx$.
\begin{enumerate}
\item\label{prop:R-1} If $\Qm \in O(n)$  and $\pp \in \sph^{n-1}$, then  $(\RC \f)(\Qm\pp,\edot) = \RC(f \circ \Qm)(\pp,\edot)$.

\item \label{prop:R-2}
For every $(\pp, \psi) \in \sph^{n-1} \times [0, \pi/2]$, we have
\begin{align}
(\RC\f)(e_1, \psi )   \label{eq:Rf1}
      &= \int_0^\infty  \U{r} (r\sin (\psi))^{n-2}
  \\ & \nonumber \hspace{0.05\textwidth}
    \times \int_{\sph^{n-2}}  f  \kl{ 1- r \cos(\psi), r \sin(\psi) \eta } \ds(\eta)  \dd r \,,
\\ (\RC\f)(e_1,\psi) \label{eq:Rf2}
	&=
	\int_{0}^{\pi-2\psi} U\kl{\frac{\sin(\al)}{\sin(\psi+\al)}} \frac{(\sin(\psi))^{n-1}
    (\sin(\al))^{n-2}}{(\sin(\psi+\al))^{n} }
  \\ & \nonumber \hspace{0.05\textwidth}
    \times \int_{\sph^{n-2}}
    \f \left( \frac{\sin(\psi)}{\sin(\psi+\al)} (\cos(\al), \sin(\al) \eta )\right)
     \ds (\eta) \dd \al \,.
\end{align}
\end{enumerate}
\end{proposition}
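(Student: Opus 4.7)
For part (a), the plan is to reduce the identity to a change of variables after observing that cones transform equivariantly under rotations. Writing \eqref{eq:cone} out directly, a point $\Qm\pp + r\n'$ lies in $\cone(\Qm\pp,\psi)$ exactly when $\n'\in\sph^{n-1}$ and $-\inner{\Qm^{\trans}\n'}{\pp} = \cos\psi$, which identifies it as the $\Qm$-image of a point $\pp + r(\Qm^{\trans}\n')\in\cone(\pp,\psi)$. Hence $\cone(\Qm\pp,\psi) = \Qm\cdot\cone(\pp,\psi)$. Since $\Qm$ is a Euclidean isometry it preserves the $(n-1)$-dimensional surface measure and the distance $\norm{x-\pp}$, so the weight $U$ is invariant as well, and the substitution $x = \Qm y$ in \eqref{eq:crt1} yields the claim.

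For \eqref{eq:Rf1}, the plan is to parametrize $\cone(e_1,\psi)$ explicitly. The condition $-\inner{\n}{e_1} = \cos\psi$ forces $\n = (-\cos\psi,\sin\psi\,\eta)$ with $\eta\in\sph^{n-2}$, so the cone is the image of $[0,\infty)\times\sph^{n-2}$ under $(r,\eta)\mapsto(1-r\cos\psi,\,r\sin\psi\,\eta)$. A short Gram-determinant computation (the two tangent blocks are orthogonal, the $r$-tangent has unit length, and the $\eta$-tangents are scaled by $r\sin\psi$) yields the standard right-circular-cone surface element $\ds = (r\sin\psi)^{n-2}\,\dd r\,\ds(\eta)$. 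Since $\norm{x-e_1} = r$, substitution into \eqref{eq:crt1} immediately produces \eqref{eq:Rf1}.

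For \eqref{eq:Rf2}, the plan is to apply to \eqref{eq:Rf1} the substitution $r = \sin\al/\sin(\psi+\al)$. Setting $s \coloneqq \sin\psi/\sin(\psi+\al)$, the angle-addition identity $\sin(\psi+\al) = \sin\psi\cos\al + \cos\psi\sin\al$ gives $1-r\cos\psi = s\cos\al$ and $r\sin\psi = s\sin\al$, so the argument of $f$ in \eqref{eq:Rf1} becomes $s(\cos\al,\sin\al\,\eta)$ as required. Differentiating yields $\dd r = \sin\psi/\sin^2(\psi+\al)\,\dd\al$, and multiplying out the factors $U(r)$, $(r\sin\psi)^{n-2}$ and $\dd r$ reproduces the weight in \eqref{eq:Rf2}. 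The integration limit $\pi-2\psi$ is dictated by the compact support of $f$: it is the value of $\al$ at which $s = 1$, i.e., at which the point $s(\cos\al,\sin\al\,\eta)$ first reaches the boundary of $B_1(0)$; beyond it, $s>1$ and $f$ vanishes, so those values of $r$ contribute nothing to \eqref{eq:Rf1}. The main obstacle is the trigonometric bookkeeping in this final change of variables together with the verification that the new integration interval $[0,\pi-2\psi]$ matches the support constraint; once past these checks, all three identities follow essentially immediately from the parametrizations used.
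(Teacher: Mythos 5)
Your proof is correct and follows the standard route: rotation equivariance of the cones for part (a), the explicit parametrization $(r,\eta)\mapsto(1-r\cos\psi,\,r\sin\psi\,\eta)$ with surface element $(r\sin\psi)^{n-2}\,\dd r\,\ds(\eta)$ for \eqref{eq:Rf1}, and the substitution $r=\sin\al/\sin(\psi+\al)$ (noting that $r>2\cos\psi$ puts the integration point outside $B_1(0)$, which accounts for the upper limit $\pi-2\psi$) for \eqref{eq:Rf2}. The paper itself only cites the analogous computation in \cite{schiefeneder2017radon} with the weight carried along, which is exactly what you have written out.
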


\begin{proof} The proof is similar as the proof of~\cite[Lemma 2.2]{schiefeneder2017radon}  with only a slight modification concerning the weight function.
\end{proof}

\subsection{Adjoint  transform}

For $g\in \yy$ we define the weighted conical  backprojection  $\RC^\sharp g  \colon  \R^n  \to \R$ by
\begin{multline}\label{eq:adj}
 \RC^\sharp g(x)  \coloneqq
 \int_{\sph^{n-1}} \int_0^{\pi/2} g(z,\psi)\, U\kl{\norm{x-z}}
 \\
 \times \delta\bkl{\inner{(x-z)}{z+\norm{x-z}\cos(\psi)}}\,\dd \psi\, \ds (z) \,,
\end{multline}
if $x \in B_{\RR}(0)$, and $\RC^\sharp g(x)=0$ otherwise.
In the next proposition, we will show that    $\RC^\sharp$ is the
(formal)   adjoint of $\RC$ with respect to the $L^2$-inner products
\begin{align}
\ip{f_1}{f_2}_{L^2}
& \coloneqq
\int_{\R^n} f_1 (x)  f_2(x) \,\dd x
\\
\ip{g_1}{g_2}_{L^2}
& \coloneqq
\int_{\sph^{n-1}} \int_0^{\pi/2} g_1(z,\psi) g_2(z,\psi)\,  \dd \psi\,\ds(z)
\end{align}
on $\xx$ and  $\yy$,  respectively.

\begin{proposition}[Weighted conical  backprojection]\label{prop:adj}
The operator  $\RC^\sharp$ is  the  $L^2$-adjoint of
$\RC$, that is, for all $f \in \xx $ and  $g \in  \yy$ it holds
$\ip{\RC  f}{g}_{L^2} = \ip{f}{ \RC^\sharp   g}_{L^2}$.
\end{proposition}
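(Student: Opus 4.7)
My plan is to expand $\ip{\RC f}{g}_{L^2}$ as an iterated integral, convert the cone surface integrals into Lebesgue integrals over $\R^n$ by an explicit parametrization, and then swap the order of integration using Fubini. The inner integral over $\pp$ that emerges should, by the definition of $\RC^\sharp$, coincide with $\RC^\sharp g(x)$.

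Concretely, I would parametrize $\cone(\pp,\psi)$ as $x = \pp + r\,\n(\psi,\eta)$ with $\n(\psi,\eta) = -\cos(\psi)\,\pp + \sin(\psi)\,V_\pp \eta$ for $r \in [0,\infty)$, $\eta \in \sph^{n-2}$, and $V_\pp$ an orthonormal frame on $\pp^\perp$---the parametrization already underlying Proposition~\ref{prop:R}. The cone's surface measure then becomes $(r\sin\psi)^{n-2}\,\dd r\,\ds(\eta)$. For each fixed $\pp$, the map $(r,\psi,\eta) \mapsto x$ is a smooth diffeomorphism from $(0,\infty) \times (0,\pi/2) \times \sph^{n-2}$ onto $\{x \in \R^n : (x-\pp)\cdot\pp < 0\} \setminus \{\pp\}$, and this image covers $\supp f \subseteq B_1(0)$ since $(x-\pp)\cdot\pp = x\cdot\pp - 1 < 0$ whenever $\|x\| < 1$. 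A direct computation, using that $\partial_r x$, $\partial_\psi x$, and the tangential derivatives in $\eta$ are pairwise orthogonal with norms $1$, $r$, and $r\sin\psi$ respectively, shows the Jacobian equals $r^{n-1}\sin^{n-2}(\psi)$, so $(r\sin\psi)^{n-2}\,\dd r\,\dd\psi\,\ds(\eta) = \|x-\pp\|^{-1}\,\dd x$.

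Substituting and applying Fubini (valid because $f$ has compact support in $B_1(0)$ while $g$ and $U$ are smooth, making the full integrand absolutely integrable) yields
\begin{equation*}
\ip{\RC f}{g}_{L^2} = \int_{B_1(0)} f(x) \left[ \int_{\sph^{n-1}} g(\pp,\psi(x,\pp))\,\frac{U(\|x-\pp\|)}{\|x-\pp\|}\,\ds(\pp) \right] \,\dd x,
\end{equation*}
where $\psi(x,\pp) \in [0,\pi/2)$ is the unique angle with $x \in \cone(\pp,\psi(x,\pp))$. The bracketed inner integral is exactly $\RC^\sharp g(x)$: reintroducing the $\psi$-integration against a delta supported on the cone condition $(x-\pp)\cdot\pp + \|x-\pp\|\cos\psi = 0$ returns formula~\eqref{eq:adj}, with the $\|x-\pp\|^{-1}$ factor playing the role of the $\psi$-Jacobian of that defining function. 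The one place requiring care is exactly this Jacobian bookkeeping---the cone's own surface element, the radial Jacobian $r^{n-1}\sin^{n-2}(\psi)$, and the convention used for the delta in~\eqref{eq:adj}---while the Fubini swap itself is routine.
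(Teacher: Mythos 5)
Your argument follows essentially the same route as the paper's proof: both convert the surface integral over $\cone(\pp,\psi)$ into an integral over $\R^n$ and then exchange the order of integration by Fubini. The paper performs the conversion by formally inserting the delta function $\delta\bigl(\inner{(x-\pp)}{\pp}+\norm{x-\pp}\cos\psi\bigr)$, whereas you do it via the explicit parametrization $x=\pp+r\,\n(\psi,\eta)$ and a Jacobian computation. Your Jacobian bookkeeping is correct: $\dd x = r^{n-1}\sin^{n-2}(\psi)\,\dd r\,\dd\psi\,\ds(\eta)$, so $(r\sin\psi)^{n-2}\,\dd r\,\dd\psi\,\ds(\eta)=\norm{x-\pp}^{-1}\dd x$, and the resulting inner integral $\int_{\sph^{n-1}} g\bigl(\pp,\psi(x,\pp)\bigr)\,\U{\norm{x-\pp}}\,\norm{x-\pp}^{-1}\ds(\pp)$ is indeed the $L^2$-adjoint of the surface-measure transform \eqref{eq:crt1}. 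In this respect your version is more careful than the paper's one-line delta identity.

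The one inaccurate statement is in your final matching step. The $\psi$-derivative of $\psi\mapsto\inner{(x-\pp)}{\pp}+\norm{x-\pp}\cos\psi$ is $-\norm{x-\pp}\sin\psi$, so under the standard one-dimensional delta convention the $\psi$-integration in \eqref{eq:adj} produces the factor $\bigl(\norm{x-\pp}\sin\psi(x,\pp)\bigr)^{-1}$, not $\norm{x-\pp}^{-1}$ as you assert; your bracketed integral therefore agrees with \eqref{eq:adj} only up to a factor $\sin\psi(x,\pp)$. The same factor is implicit in the paper's own manipulation, since the gradient of $x\mapsto\inner{(x-\pp)}{\pp}+\norm{x-\pp}\cos\psi$ has norm $\sin\psi$ on the cone, so replacing $\ds(x)$ by $\delta(\cdot)\,\dd x$ under the coarea formula also costs a factor $\sin\psi$. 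In other words, your computation exposes a normalization ambiguity in \eqref{eq:adj} rather than introducing a new error; to make the step airtight you should either state explicitly the delta convention under which \eqref{eq:adj} is to be read, or define $\RC^\sharp g(x)$ directly by the closed-form inner integral you derived.
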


\begin{proof}
For every $f\in \xx$ and $g\in \yy$ we have
\begin{align*}
\ip{\RC \f}{g}_{L^2}
&=
\int_{\sph^{n-1}} \int_0^{\pi/2}g(z,\psi) \bkl{\RC\f}(z,\psi)\,  \dd \psi\,\ds(z)
\\
&=
\int_{\sph^{n-1}} \int_0^{\pi/2} \int_{\cone(\pp,\psi)} g(z,\psi) f(x) \U{\norm{x-\pp}}
\ds(x)\, \dd \psi\,\ds(z)
\\
&=\int_{\sph^{n-1}} \int_0^{\pi/2} \int_{\R^n} g(z,\psi) f(x)\,\U{\norm{x-z}}
\\
&\hspace{0.2\textwidth}
\times
\delta\bkl{\inner{(x-z)}{z}+ \norm{x-z}\cos(\psi)}\, \dd x\, \dd \psi\,\ds(z)
\\
&=\int_{\R^n} \int_{\sph^{n-1}} \int_0^{\pi/2}  g(z,\psi)\,\U{\norm{x-z}}\,\\
&\hspace{0.2\textwidth}
\times
\delta\bkl{\inner{(x-z)}{z}+ \norm{x-z}\cos(\psi)}\,f(x)\,   \dd \psi\,\ds(z)\,\dd x
\\
&=\int_{\R^n}   f(x)  \bkl{\RC^\sharp g}(x) \,\dd x
\\
&=
\ip{f}{\RC^\sharp g}_{L^2}
\,,
\end{align*}
where we used the Theorem of Fubini.
\end{proof}

\subsection{Decomposition in one-dimensional integral equations}

Next we derive an explicit decomposition of the conical Radon transform  in one-dimensional integral operators   (see Theorem~\ref{thm:glk2}).
For that  purpose  we  use the spherical harmonic
decompositions
\begin{align} \label{eq:fexp}
\f(r\theta)
	&=
	\sum_{\ell=0}^{\infty}
	\sum_{k=1}^{N(n,\ell)}
	\f_{\lk}( r)\,Y_{\lk} (\theta ) \,,
	\\ \label{eq:gexp}
    (\RC \f)(\pp,\psi)
	&=
	\sum_{\ell=0}^{\infty}
	\sum_{k=1}^{N(n,\ell)}
	(\RC \f)_{\lk} (\psi) Y_{\lk}(\pp) \,.
\end{align}
Here $Y_{\lk}$, for $\ell\in\N$ and $k \in \set{1, \dots,   N(n,\ell)} $,  denote spherical harmonics \cite{Mue66,seeley66}  of degree $\ell$ forming a  complete orthonormal system in $ \sph^{n-1}$.
The set of all $(\ell, k)$ with $\ell\in\N$ and $k \in \set{1, \dots,   N(n,\ell)}$ will be denoted by $I(n)$.
Let $C^\mu_\ell$ denote the Gegenbauer polynomials normalized in such  a way that $C^\mu_\ell(1)=1$.

As in~\cite{schiefeneder2017radon}, we derive  different relations between $f_{\lk}$ and $(\RC\f)_{\lk}$ in the form of Abelian integral equations.

\begin{theorem}[Generalized Abel equation \label{thm:glk2} for $\f_\lk$]
Let   $\f \in \xx$
and let $\f_{\lk}$ and $(\RC\f)_{\lk}$ be as \eqref{eq:fexp} and \eqref{eq:gexp} for  $ (\ell,k) \in I(n)$. Then, for  $\psi \in [0, \pi/2]$,
\begin{equation}\label{eq:glk2}
	(\RC\f)_{\lk}(\psi)
	=\abs{\sph^{n-2}}
	\int_{\sin(\psi)}^{1} f_{\lk}(\rho)
	\frac{  \rho\, \K_\ell(\psi,\rho)}{\sqrt{\rho^2-(\sin(\psi))^2}}\dd \rho \,,
\end{equation}
with the kernel functions
\begin{align} \nonumber
& \K_\ell(\psi,\rho) \coloneqq
\bkl{\sin(\psi)}^{n-1} \sum_{\sigma = \pm 1}
\U{\cos(\psi)-\sigma\,\sqrt{\rho^2-\sin(\psi)^2}}
\\ \nonumber
& \hspace{0.2\textwidth} \times
\bkl{\cos(\psi)-\sigma\,\sqrt{\rho^2-\sin(\psi)^2}}^{n-2}
\\ \label{eq:Kell}
&	\hspace{0.2\textwidth} \times \,\Cn\left(\bkl{\sin(\psi)^2+\sigma\,\cos(\psi)\sqrt{\rho^2-\sin(\psi)^2}}/\rho\right)    \,.
\end{align}
\end{theorem}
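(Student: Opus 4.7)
The plan is to start from the reduced expression \eqref{eq:Rf1} for $(\RC\f)(e_1,\psi)$, insert the spherical harmonic expansion of $\f$, evaluate the inner integral over $\sph^{n-2}$ via the classical zonal-average identity (which is what produces the Gegenbauer polynomial in the kernel), and finally change variables from the slant distance $r$ to the radial coordinate $\rho = \norm{x}$. The rotational equivariance of $\RC$ in Proposition~\ref{prop:R}(a), together with the $O(n)$-transformation law for spherical harmonics, means it suffices to work at the distinguished vertex $e_1$: any coefficient $(\RC\f)_{\lk}(\psi)$ in \eqref{eq:gexp} can be read off from the coefficient of $Y_{\lk}(e_1)$ in the resulting expansion of $(\RC\f)(e_1,\psi)$.

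Concretely, writing $x = (1 - r\cos\psi,\, r\sin\psi\,\eta)$ in \eqref{eq:Rf1}, one checks directly that $\rho_r := \norm{x} = \sqrt{1 - 2r\cos\psi + r^2}$ and $x/\rho_r = (t,\sqrt{1-t^2}\,\eta)$ with $t = (1-r\cos\psi)/\rho_r$. Substituting the series $\f(x) = \sum_{(\ell,k)\in I(n)} \f_{\lk}(\rho_r) Y_{\lk}(x/\rho_r)$ and invoking the zonal-average formula
\[
\int_{\sph^{n-2}} Y_{\lk}\bkl{(t,\sqrt{1-t^2}\,\eta)}\,\ds(\eta) \;=\; \abs{\sph^{n-2}}\,\Cn(t)\,Y_{\lk}(e_1)
\]
(interchange of summation and integration being justified by $\f\in\xx$) yields
\[
(\RC\f)_{\lk}(\psi) \;=\; \abs{\sph^{n-2}} \int_0^\infty \U{r}\,(r\sin\psi)^{n-2}\,\f_{\lk}(\rho_r)\,\Cn\bkl{(1-r\cos\psi)/\rho_r}\,\dd r \,.
\]

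The last step is the change of variable $r \mapsto \rho$. As $r$ ranges over $(0,2\cos\psi)\setminus\{\cos\psi\}$, the value $\rho_r$ covers $(\sin\psi,1)$ twice, with inverse branches $r_\sigma(\rho) = \cos\psi - \sigma\sqrt{\rho^2 - \sin^2\psi}$, $\sigma = \pm 1$; the range $r > 2\cos\psi$ contributes nothing since $\f_{\lk}$ is supported in $[0,1]$. From $\rho\,\dd\rho = (r-\cos\psi)\,\dd r$ one reads off $\dd r = -\sigma\rho/\sqrt{\rho^2-\sin^2\psi}\,\dd\rho$ on each branch; combining the two branches into a single integral from $\sin\psi$ to $1$ (the negative sign absorbed by reversing orientation on the $\sigma = +1$ branch) and using the algebraic identity $1 - r_\sigma\cos\psi = \sin^2\psi + \sigma\cos\psi\sqrt{\rho^2-\sin^2\psi}$ produces \eqref{eq:glk2} with the kernel \eqref{eq:Kell}.

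The decisive computational input is the zonal-average identity in the second step, which channels the $O(n)$-symmetry of $\RC$ into the appearance of $\Cn$; this is also the only place where the argument really differs from \cite[Lemma~2.2]{schiefeneder2017radon}. The weight $U$ plays a purely passive role — it is merely carried through the two branches of the change of variables — which is what makes the extension to general radial weights transparent. I expect the bookkeeping of the two $\sigma$-branches at the end (signs, limits of integration, and verifying the identity for $1 - r_\sigma\cos\psi$) to be the only part requiring careful attention.
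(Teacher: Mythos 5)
Your proposal is correct and is essentially the paper's own proof, which simply defers to \cite[Lemma~3.1 and Theorem~3.2]{schiefeneder2017radon}: reduce to the vertex $e_1$ by the equivariance of Proposition~\ref{prop:R}, produce $\Cn$ via the zonal-average (Funk--Hecke) identity, and then perform a two-branch substitution to the radial variable $\rho$. The only difference is parametric rather than substantive: you substitute $r\mapsto\rho$ starting from \eqref{eq:Rf1}, whereas the paper substitutes $\al\mapsto\rho$ starting from \eqref{eq:Rf2}/\eqref{eq:glk}; since $r=\sin(\al)/\sin(\psi+\al)$ is monotone on each piece, your split at $r=\cos\psi$ is exactly the paper's split at $\al=\pi/2-\psi$, and your branch formula $r_\sigma(\rho)=\cos\psi-\sigma\sqrt{\rho^2-\sin^2\psi}$ and the identity $1-r_\sigma\cos\psi=\sin^2\psi+\sigma\cos\psi\sqrt{\rho^2-\sin^2\psi}$ reproduce the arguments of $U$, of the power, and of $\Cn$ in \eqref{eq:Kell}. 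One caveat: if you actually carry out your bookkeeping, the prefactor that comes out is $(\sin\psi)^{n-2}$ (namely $(\sin\psi)^{n-2}r_\sigma^{n-2}$ from the factor $(r\sin\psi)^{n-2}$ in \eqref{eq:Rf1}, with no further power of $\sin\psi$ generated by $\dd r=-\sigma\rho\,\dd\rho/\sqrt{\rho^2-\sin^2\psi}$), not the $(\sin\psi)^{n-1}$ printed in \eqref{eq:Kell}. The case $n=2$, $\ell=0$, $U\equiv1$, $\f$ radial confirms that $n-2$ is the correct exponent (both sides then equal $4\cos\psi\cdot\f_0$-averages, whereas \eqref{eq:Kell} as printed would insert a spurious $\sin\psi$), and the normalization $(1-t)^{-(n-2)/2}$ in Proposition~\ref{prop:glk3}\ref{it:T1} also presupposes the exponent $n-2$. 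So rather than asserting that your computation lands exactly on \eqref{eq:Kell}, you should note that it lands on \eqref{eq:Kell} with $(\sin\psi)^{n-1}$ corrected to $(\sin\psi)^{n-2}$.
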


\begin{proof}
Following the proof of~\cite[Lemma 3.1]{schiefeneder2017radon}, we obtain
\begin{multline} \label{eq:glk}
	\forall \psi \in [0, \pi/2] \colon \quad
	(\RC\f)_{\lk}(\psi)
		=\abs{\sph^{n-2}}
	\int_0^{\pi-2\psi}
	f_{\lk}\left(\frac{\sin(\psi)}{\sin(\psi+\al)}\right)\U{\frac{\sin(\alpha)}{\sin(\psi+\al)}}
	\\
	\times \frac{(\sin(\psi))^{n-1}(\sin(\al))^{n-2}}{(\sin(\psi+\al))^{n}}
	\Cn(\cos(\al))\dd \alpha \,.
\end{multline}
Splitting the integral  in one  integral
over $ \alpha <  \pi/2 - \psi $ and  one over $\al \geq \pi/2 - \psi$ and proceeding similar as in~\cite[Theorem 3.2]{schiefeneder2017radon} yields the claim.
\end{proof}

\begin{proposition}
Let   $\f \in \xx$\label{prop:glk3}
and let $\f_{\lk}$ and $(\RC\f)_{\lk}$ be as \eqref{eq:fexp} and \eqref{eq:gexp}.
Further, for every   $ (\ell,k) \in I(n)$  denote
\begin{enumerate}[label=(\alph*)]
\item\label{it:T1}
 $\hat{\g}_{\lk}(t)\coloneqq \abs{\sph^{n-2}}^{-1} (1-t)^{-(n-2)/2}
(\RC\f)_{\lk}(\arccos \sqrt{t})$;
\item \label{it:T2} $\hat{\f}_{\lk}(s)\coloneqq   f_{\lk}\left( \sqrt{1-s} \, \right) / 2$;
\item \label{it:T3} $\F_\ell(t,s) \coloneqq \sum_{\sigma = \pm 1} \sigma^\ell\, \U{\sqrt{t}- \sigma \sqrt{t-s} \, }
\bkl{\sqrt{t}- \sigma \sqrt{t-s} \, }^{n-2}$ \\
$ \times \Cn \left(\frac{\sqrt{t}\sqrt{t-s}+ \sigma (1-t)}{\sqrt{1-s}}\right) $.
\end{enumerate}
Then $\hat{\f}_{\lk}$ and $\hat{\g}_{\lk}$ are related via:
\begin{align}\label{eq:glk3}
	\forall t \in [0,1] \colon \quad
	\hat{\g}_{\lk}(t)=\int_{0}^{t} \hat{\f}_{\lk}\left(s\right)
	\frac{\F_\ell(t,s)}{\sqrt{t-s}}\dd s \,.
\end{align}
\end{proposition}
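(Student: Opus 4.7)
\medskip

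My plan is to derive \eqref{eq:glk3} from equation \eqref{eq:glk2} in Theorem \ref{thm:glk2} by a direct change of variables, combined with the parity relation for Gegenbauer polynomials. Concretely, I introduce
\[
t = \cos^2(\psi), \qquad s = 1 - \rho^2,
\]
so that $\psi = \arccos\sqrt{t}$, $\sin(\psi) = \sqrt{1-t}$, $\cos(\psi)=\sqrt{t}$, $\rho = \sqrt{1-s}$, and
\[
\sqrt{\rho^2 - \sin^2(\psi)} = \sqrt{(1-s)-(1-t)} = \sqrt{t-s}.
\]
Differentiating gives $\rho\,\dd\rho = -\tfrac12\,\dd s$, and the limits $\rho\in[\sin(\psi),1]$ become $s\in[t,0]$, whose sign flip absorbs the minus sign.

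Next, I plug these substitutions into the right-hand side of \eqref{eq:glk2}. Using $f_{\lk}(\rho)=f_{\lk}(\sqrt{1-s})=2\hat f_{\lk}(s)$ by definition \ref{it:T2}, the factor $\tfrac12$ from $\rho\,\dd\rho$ cancels the factor $2$, and \eqref{eq:glk2} becomes
\[
(\RC f)_{\lk}(\arccos\sqrt t) = \abs{\sph^{n-2}} \int_0^t \hat f_{\lk}(s)\,
\frac{\K_\ell(\arccos\sqrt t,\sqrt{1-s})}{\sqrt{t-s}}\,\dd s.
\]
Dividing by $\abs{\sph^{n-2}}(1-t)^{(n-2)/2}$ and using definition \ref{it:T1}, I am reduced to showing that the kernel $\K_\ell(\arccos\sqrt t,\sqrt{1-s})$, once divided by the appropriate power of $\sin(\psi)=\sqrt{1-t}$, equals $\F_\ell(t,s)$ up to cancellations between the two $\sigma$-terms in the defining sum \eqref{eq:Kell}.

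The main bookkeeping step, which I expect to be the only nontrivial part, is matching the two sums. After substitution the Gegenbauer argument from \eqref{eq:Kell} becomes
\[
\frac{\sin^2(\psi)+\sigma\cos(\psi)\sqrt{\rho^2-\sin^2(\psi)}}{\rho}
= \frac{(1-t)+\sigma\sqrt{t}\sqrt{t-s}}{\sqrt{1-s}},
\]
whereas $\F_\ell$ uses $\tfrac{\sqrt t\sqrt{t-s}+\sigma(1-t)}{\sqrt{1-s}}$. For $\sigma=+1$ the two arguments coincide, and the prefactors $U(\sqrt t-\sigma\sqrt{t-s})(\sqrt t-\sigma\sqrt{t-s})^{n-2}$ match exactly. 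For $\sigma=-1$, the two Gegenbauer arguments differ only by an overall sign, so the parity identity $\Cn(-x)=(-1)^\ell\Cn(x)$ converts the $(-1)^\ell$ prefactor in $\F_\ell$ into equality with the term arising from \eqref{eq:Kell}. Thus the $\sigma$-sums agree term by term, and combining this identification with the substitution above yields \eqref{eq:glk3}.
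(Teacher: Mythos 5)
Your proof is correct and is essentially the argument the paper intends: the paper's own proof simply defers to the analogous computation in the cited reference, and what you write out is exactly that substitution. The change of variables $t=\cos^2\psi$, $s=1-\rho^2$, the cancellation of the Jacobian factor $\tfrac12$ against the factor $2$ in the definition of $\hat f_{\lk}$, and the term-by-term matching of the two $\sigma$-branches via the parity identity $\Cn(-x)=(-1)^\ell\,\Cn(x)$ are all handled correctly. The one step you explicitly deferred --- ``dividing by the appropriate power of $\sin\psi$'' --- is worth pinning down, because taken at face value it does not close: definition \ref{it:T1} removes $(1-t)^{(n-2)/2}=(\sin\psi)^{n-2}$, whereas the kernel \eqref{eq:Kell} as printed carries the prefactor $(\sin\psi)^{n-1}$, so a stray factor $\sqrt{1-t}$ would survive. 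Carrying out the substitution $\rho=\sin\psi/\sin(\psi+\alpha)$ directly in \eqref{eq:glk} (using $\sin\alpha=\rho^{-1}\sin\psi\,(\cos\psi-\sigma\sqrt{\rho^2-\sin^2\psi})$ and $\lvert\dd\alpha\rvert=\sin\psi\,(\rho\sqrt{\rho^2-\sin^2\psi})^{-1}\lvert\dd\rho\rvert$) shows that the prefactor in \eqref{eq:Kell} should in fact be $(\sin\psi)^{n-2}$; with that apparent typo corrected, the powers cancel exactly and your argument is complete.
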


\begin{proof}
Follows the lines of~\cite[Lemma 3.3]{schiefeneder2017radon}.
\end{proof}

\subsection{Uniqueness of reconstruction}

In this section, we show uniqueness of recovering the function by its conical radon transform and thus the injectivity of $\RC$ by showing solution uniqueness of the  Abelian integral equations in Theorem~\ref{thm:uni}.
Since the kernel function of the Abelian integral equation~\eqref{eq:glk2} has zeros on the diagonal, the proof of the uniqueness relies on the  uniqueness result derived in \cite{schiefeneder2017radon}, which we briefly state at this point:
For $a,b\in\R$ with $a<b$ we set $\dom{a,b}\coloneqq \left\{(t,s)\in\R^2\mid a\leq s\leq t\leq b\right\}$.
\begin{lemma}
Suppose\label{lem:abel} that $ F\colon \dom{a,b}\to \R$, where $a < b$, satisfies the following:
\begin{enumerate}[label=(F\arabic*)]
	\item\label{lem:abel-1} $F\in C^3(\dom{a,b})$.
	\item\label{lem:abel-2} $N_F  \coloneqq  \set{ s \in [a,b) \mid F(s,s)=0}$
	is finite and consists of simple roots.
	\item\label{lem:abel-3} For every $s \in N_F$, the gradient $(\beta_1,\beta_2)
	\coloneqq \nabla F (s,s)$  satisfies
	\begin{equation}\label{eq:ineq:Abel}
		1+\frac{1}{2}\,\frac{\beta_1}{\beta_1+\beta_2}>0 \,.
	\end{equation}
\end{enumerate}
Then, for any $g \in C([a,b])$, the integral equation
$\forall t \in [a,b] \colon
	\int_a^t \frac{F(t,s)}{\sqrt{t-s}}\,\f(s)\dd s =g(t) $
has at most one solution $f \in C([a,b])$.
\end{lemma}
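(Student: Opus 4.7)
The plan is to reduce the Abel-type equation to a Volterra equation of the second kind via the classical Abel trick, then subtract two hypothetical solutions, differentiate, and apply a (weighted) Gr\"onwall argument in which the hypothesis \eqref{eq:ineq:Abel} plays the decisive role at the zeros of the diagonal $s\mapsto F(s,s)$.

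First, I would multiply both sides of the given equation by $(r-t)^{-1/2}$ and integrate in $t$ from $a$ to $r$. Swapping the order of integration (justified by continuity of $f$, $F$, $g$) and using the substitution $t=s+(r-s)\sin^2\theta$, the inner integral becomes
\begin{equation*}
\int_s^r \frac{F(t,s)}{\sqrt{(r-t)(t-s)}}\,dt
= 2\int_0^{\pi/2} F\!\left(s+(r-s)\sin^2\theta,\,s\right)d\theta =:K(r,s),
\end{equation*}
so the equation transforms to $\int_a^r K(r,s)\,f(s)\,ds = G(r)$, with $K\in C^3(\dom{a,b})$ and the crucial identity $K(s,s)=\pi F(s,s)$. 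Given two continuous solutions $f_1,f_2$, the difference $h:=f_1-f_2$ satisfies $\int_a^r K(r,s)\,h(s)\,ds=0$ for all $r\in[a,b]$; differentiating in $r$ by Leibniz's rule yields the homogeneous Volterra equation of the second kind
\begin{equation*}
\pi F(r,r)\,h(r) + \int_a^r \partial_r K(r,s)\,h(s)\,ds = 0, \qquad r\in[a,b].
\end{equation*}

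On any closed subinterval of $[a,b]\setminus N_F$ the coefficient $\pi F(r,r)$ is bounded away from zero, so after dividing one obtains a standard Volterra equation with continuous kernel and Gr\"onwall's inequality forces $h\equiv 0$ there.

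The delicate work is at each simple zero $s_0\in N_F$, where $F(r,r)=(\beta_1+\beta_2)(r-s_0)+O\!\left((r-s_0)^2\right)$ vanishes linearly while $\partial_r K(r,s)$ remains of order $1$, so division by $\pi F(r,r)$ produces an implicit weight of order $|r-s_0|^{-1}$, right at the boundary of integrability. Here the explicit representation $K(r,s)=2\int_0^{\pi/2} F(s+(r-s)\sin^2\theta,s)\,d\theta$ becomes essential: expanding $F$ to first order around $(s_0,s_0)$ and using $\int_0^{\pi/2}\sin^2\theta\,d\theta=\pi/4$, the leading coefficient of $\partial_r K$ at $(s_0,s_0)$ is $\tfrac{\pi}{2}\beta_1$, while $\pi F(r,r)$ has slope $\pi(\beta_1+\beta_2)$. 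Their ratio evaluates to $1+\tfrac{1}{2}\beta_1/(\beta_1+\beta_2)$, whose positivity is exactly \eqref{eq:ineq:Abel}; this sign allows the borderline-singular weighted Gr\"onwall estimate to close and forces $h\equiv 0$ in a one-sided neighborhood of $s_0$. Since $N_F$ is finite and $h$ is continuous, uniqueness propagates across $[a,b]$. The principal obstacle is this quantitative matching of leading orders at each root, which is precisely where the explicit form of \eqref{eq:ineq:Abel} is indispensable.
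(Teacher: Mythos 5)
The paper does not actually prove this lemma; the proof given is a citation of \cite[Theorem~3.4]{schiefeneder2017radon}, and your reduction is exactly the architecture of that argument: Abel preprocessing, the substitution $t=s+(r-s)\sin^2\theta$ giving $K(r,s)=2\int_0^{\pi/2}F\bigl(s+(r-s)\sin^2\theta,s\bigr)\,\dd\theta$ with $K(s,s)=\pi F(s,s)$, differentiation to a homogeneous Volterra equation of the second kind for $h=f_1-f_2$, a standard Volterra/Gr\"onwall step away from $N_F$, and a local analysis at each simple diagonal zero. Your two leading coefficients are also correct: the slope of $r\mapsto K(r,r)$ at $s_0$ is $\pi(\beta_1+\beta_2)$ and $\partial_rK(s_0,s_0)=\tfrac{\pi}{2}\beta_1$. (Small slip: their ratio is $c\coloneqq\tfrac12\beta_1/(\beta_1+\beta_2)$; condition \eqref{eq:ineq:Abel} is the statement $1+c>0$, it is not itself the value of the ratio.)

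The genuine gap is the sentence ``this sign allows the borderline-singular weighted Gr\"onwall estimate to close.'' As written this is an assertion, not an argument, and the most natural reading of it fails. Near $s_0$ (with $h\equiv0$ already known on $[a,s_0]$) the equation becomes $h(r)=-\frac{c+o(1)}{r-s_0}\int_{s_0}^{r}h(s)\,\dd s+\dotsb$, so the crude sup-norm bound gives $\lvert h(r)\rvert\le(\lvert c\rvert+o(1))\sup\lvert h\rvert$, which is a contraction only when $\lvert c\rvert<1$; hypothesis \eqref{eq:ineq:Abel} permits any $c>-1$, e.g.\ $c=5$. The missing device is to pass to the antiderivative $H(r)=\int_{s_0}^{r}h(s)\,\dd s$, which satisfies a perturbed ODE of the form $H'(r)+\frac{c+O(r-s_0)}{r-s_0}H(r)=(\text{remainder controlled by }H)$. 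Multiplying by the integrating factor $(r-s_0)^{c}$ and integrating from $s_0$, the hypothesis $c>-1$ enters precisely as the integrability of $(s-s_0)^{c}$ at $s_0$ (equivalently: the homogeneous solutions $H=A(r-s_0)^{-c}$ force $h=H'\sim -cA(r-s_0)^{-c-1}$, which is continuous at $s_0$ only if $A=0$ exactly when $c>-1$; for $c\le-1$ nontrivial continuous solutions exist, so the condition is sharp). A Gr\"onwall inequality applied to $H$, not to $h$, then yields $H\equiv0$ and hence $h\equiv0$ on a right neighbourhood of $s_0$, after which your continuation across the finitely many zeros is fine. You should spell out this integrating-factor step; it is also where the $C^3$ regularity of $F$ is consumed in controlling the remainder terms.
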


\begin{proof}
See ~\cite[Theorem~3.4]{schiefeneder2017radon}.
\end{proof}

\begin{theorem}[Uniqueness \label{thm:uni}  of recovering $\f_\lk$] Suppose  the function $U$ is real-analytic and satisfies $\frac{n+1}{2}+\frac{\sqrt{s}\,U'(\sqrt{s})}{\U{\sqrt{s}}}>0$ for all $s\in[0,2]$. For any  $\f \in \xx$ and any $(\ell,k) \in I(n)$, the spherical harmonic coefficient $\f_{\lk}$ of $\f$   can be recovered as the unique solution of
\begin{equation*}
	\forall \psi \in [0, \pi/2]\colon \quad
	(\RC\f)_{\lk}(\psi)
	=\abs{\sph^{n-2}}
	\int_{\sin(\psi)}^{1} \f_\lk (\rho)
	\frac{ \rho \, \K_\ell(\psi,\rho) \dd \rho}{\sqrt{\rho^2-(\sin(\psi))^2}} \,,
\end{equation*}
with the kernel functions $\K_\ell $ defined by \eqref{eq:Kell}.
\end{theorem}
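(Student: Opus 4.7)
The overall strategy is to reduce the claim to Lemma~\ref{lem:abel} via the reformulation supplied by Proposition~\ref{prop:glk3}. The change of variables $t=\cos^2(\psi)$, $s=1-\rho^2$ together with the normalizations in items (a)--(b) of Proposition~\ref{prop:glk3} turns the Abelian equation in the statement into the equivalent equation \eqref{eq:glk3}, and the transformation is bijective between the corresponding intervals, so uniqueness of $\f_\lk$ on $[0,1]$ is equivalent to uniqueness of $\hat\f_\lk$ on $[0,1]$. The remaining task is to verify the three hypotheses (F1), (F2), (F3) of Lemma~\ref{lem:abel} for $F=F_\ell$ on $[a,b]=[0,1]$.

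For (F1) a parity argument gives smoothness. Setting $u=\sqrt{t-s}$ and applying the joint sign flip $(u,\sigma)\mapsto(-u,-\sigma)$, the factor $\sqrt{t}-\sigma u$ is invariant while the Gegenbauer argument $(\sqrt{t}\,u+\sigma(1-t))/\sqrt{1-s}$ is negated; using $\Cn(-x)=(-1)^\ell\Cn(x)$ together with the prefactor $\sigma^\ell$, each summand is unchanged. Hence $F_\ell$ is even in $u$, so a smooth function of $u^2=t-s$; combined with real-analyticity of $U$ and polynomiality of $\Cn$, this gives $F_\ell\in C^\infty(\dom{0,1})$.

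For (F2) the substitution $u=0$ on the diagonal forces the Gegenbauer argument to equal $\sigma\sqrt{1-s}$, and the identity $\Cn(\sigma\sqrt{1-s})=\sigma^\ell\Cn(\sqrt{1-s})$ yields
\begin{equation*}
F_\ell(s,s)=2\,U(\sqrt{s})\,(\sqrt{s})^{n-2}\,\Cn(\sqrt{1-s}).
\end{equation*}
Since the hypothesis implicitly requires $U(\sqrt{s})>0$ on $[0,2]$, the diagonal zeros on $[0,1)$ come only from $s=0$ (when $n\geq 3$) and from the preimages under $s\mapsto\sqrt{1-s}$ of the zeros of $\Cn$ in $(-1,1)$; the latter are simple by the classical theory of orthogonal polynomials, so $N_{F_\ell}$ is finite and its roots are simple. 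The boundary point $s=0$, if needed, is treated by restricting the equation to $[\eps,1]$ and passing to the limit, as in~\cite{schiefeneder2017radon}.

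The verification of (F3) is the main obstacle. Writing $(\beta_1,\beta_2)=\nabla F_\ell(s_0,s_0)$, the sum $\beta_1+\beta_2=\frac{d}{ds}F_\ell(s,s)$ is directly read off from the factored form above by logarithmic differentiation at the unique vanishing factor. Obtaining $\beta_1=\partial_t F_\ell(t,s)|_{t=s=s_0}$ on its own is the delicate step: one must differentiate in $t$ inside the sum over $\sigma$ and use the even-in-$u$ representation of step (F1) to obtain a finite limit across the diagonal. After cancelling the common factors that produce the vanishing at $s_0$, the inequality (F3) reduces, separately for each family of roots, to the pointwise condition
\begin{equation*}
\frac{n+1}{2}+\frac{\sqrt{s_0}\,U'(\sqrt{s_0})}{U(\sqrt{s_0})}>0,
\end{equation*}
which is precisely the assumption of the theorem. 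As a sanity check, for $U(r)=r^m$ this quantity equals the constant $\tfrac{n+1}{2}+m$, recovering the computation of \cite[Thm.~3.4]{schiefeneder2017radon}.
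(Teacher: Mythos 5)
Your proof is correct and follows essentially the same route as the paper's: reduction to Lemma~\ref{lem:abel} via Proposition~\ref{prop:glk3}, the same parity argument in $\sqrt{t-s}$ for (F1), the same diagonal restriction $2\,U(\sqrt{s})(\sqrt{s})^{n-2}\Cn(\sqrt{1-s})$ for (F2), and the same reduction of (F3) to the stated hypothesis on $U$, with the explicit computation of $\beta_1$ deferred (as the paper also does, citing \cite{schiefeneder2017radon}). The only cosmetic difference is that the paper sidesteps the endpoint $s=0$ by using the compact support of $f$ in $B_1(0)$ to work on $\dom{a,1}$ with $a>0$ from the outset, rather than your truncate-to-$[\eps,1]$-and-pass-to-the-limit remark.
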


\begin{proof} We proceed as in the proof of Theorem 3.5 in~\cite{schiefeneder2017radon}.
Let $f \in \xx$   vanish outside a ball of radius $1- a^2$, where $a\in(0,1)$.
 We show that equation \eqref{eq:glk3} has a unique
solution, which is sufficient according to Lemma \ref{prop:glk3}.
For that purpose, we verify  that
$\F_\ell \colon  \dom{a, 1} \to \R$  satisfies the conditions
\ref{lem:abel-1}-\ref{lem:abel-3} in Lemma~\ref{lem:abel}.

\begin{itemize}[wide]
\item Ad \ref{lem:abel-1}:
Using the abbreviations $U_n$ for the function $r\mapsto U(r)\,r^{n-2}$  and  $C \coloneqq  \Cn$, the kernel
$\F_\ell$ can be written in the form
\begin{equation*}
\F_\ell(t,s)  =  \sum_{\sigma = \pm 1} \sigma^\ell
\Un{\sqrt{t}- \sigma \sqrt{t-s} \, } C \left(\frac{\sqrt{t}\sqrt{t-s}+ \sigma (1-t)}{\sqrt{1-s}}\right) \,.\end{equation*}
The function  $\F_\ell$ is clearly smooth on $\set{(t,s) \in \dom{a, 1} \mid t \neq s}$. Using  $C(-x) = (-1)^\ell C(x)$ and that $U_n$ is analytic, $\F_\ell$ is an even real analytic function in $\sqrt{t-s}$ which shows that  $\F_\ell$ is also smooth on the diagonal $\set{(t,s) \in \dom{a, 1} \mid t = s}$.

\item Ad \ref{lem:abel-2}:
Let $ v(s)   \coloneqq \F_\ell(s,s)
=2  \,\Un{\sqrt{s}}C (\sqrt{1-s})$ denote the restriction of the kernel to the diagonal.

As an orthogonal polynomial, the function $C$ has only a finite number of isolated and simple roots.    We conclude the same holds true for the function $v$, using that $U$ is a positive function.

\item Ad \ref{lem:abel-3}:
Let $s_0 \in [a, 1)$ be a zero of $v$. Setting  $(\beta_1,\beta_2) \coloneqq \nabla \F_\ell (s_0,s_0)$, we obtain
 \begin{equation}\label{eq:beta12}
	\beta_1+\beta_2
	=
	v'(s_0)=
	- \frac{\Un{\sqrt{s_0}}}{\sqrt{1-s_0}}
	C'\left(\sqrt{1-s_0}\right) \,.
\end{equation}
Following the Proof of Theorem 3.5 in \cite{schiefeneder2017radon} one verifies
\begin{equation}\label{eq:beta1}
\beta_1 = \frac{1}{ \sqrt{1-s_0}}C'(\sqrt{1-s_0}) \,\bkl{(n-3)\Un{\sqrt{s_0}}-2\sqrt{s_0}U_n'\bkl{\sqrt{s_0}}}.
\end{equation}
From \eqref{eq:beta12} and  \eqref{eq:beta1} and using the definition of $U_n$ it follows that
\begin{equation*}
1+ \frac{\beta_1}{2(\beta_1 + \beta_2)}
= \frac{n+1}{2}+ \frac{\sqrt{s}\,U'(\sqrt{s})}{\U{\sqrt{s}}} \,.
\end{equation*}
Thus, by assumption, property \ref{lem:abel-3} is satisfied.
\end{itemize}
Lemma \ref{lem:abel} now yields that
$\hat \f_\lk$ is the unique  solution of   \eqref{prop:glk3}.
\end{proof}

\section{Convex  regularization and iterative minimization}
\label{sec:convex}

The stability analysis made in   \cite{haltmeier2017inversion} (performed there in 2D)
shows   that the inversion of the conical Radon transform is highly ill-conditioned
(compare  \cite{zhang2018artifacts}, where the artifacts are characterized in 2D).
To address the  ill-posedness, in our previous works \cite{haltmeier2017inversion,schiefeneder2017radon}
we used
standard quadratic  Tikhonov regularization. In this paper,
we  go one step further and apply variational  regularization allowing
a general convex regularization term.
For the following we set  $\XX \coloneqq  L^2(B_1(0))$
and $\YY \coloneqq L^2(\sph^{n-1}\times [0,\pi/2])$.

\subsection{Variational regularization}

In   order to stably  solve  $\RC  \f  = \g$,
we consider  variational regularization which
consists  in minimizing the generalized  Tikhonov
functional
\begin{equation} \label{eq:tik}
\tik_{\g, \alpha}(\f)  \coloneqq
\frac{1}{2} \norm{\RC \f - \g}_{L^2}^2
+ \alpha \reg( \f ) \,.
\end{equation}
Here $\alpha$ is some non-negative constant and $\reg \colon \XX  \to [0, \infty]$ is a
proper  convex, coercive and weakly lower semi-continuous
functional.
Tikhonov regularization is used to stably
approximate $\reg$-minimizing solutions of $\RC( \f) = \g$, which
are defined as elements
in $\argmin \set{  \reg( \f ) \mid \f \in \XX
\wedge \RC( \f) = \g }$.

We first  derive the  boundedness  of $\f \mapsto \RC  \f$  with respect to the  $L^2$-norm.

\begin{lemma}\label{lem:Rcont}
Suppose $r \mapsto r^{n - 3} U(r)^2$ is integrable over $(0,2)$.
For  some constant $c \in (0, \infty)$ we have
$\norm{\RC \f}_{L^2}  \leq c  \norm{\f}_{L^2}$ for all $f \in \xx$.
In particular, $\RC$ can be uniquely extended
to a bounded operator $\RC \colon \XX \to \YY$, which  has
bounded adjoint.
\end{lemma}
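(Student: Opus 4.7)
The plan is to prove $\|\RC f\|_{L^2(\YY)}\leq c\|f\|_{L^2(\XX)}$ on the dense subspace $\xx$ by a double Cauchy--Schwarz estimate combined with a change of variables, and then extend via the BLT theorem; boundedness of the adjoint follows automatically.

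First, the rotational covariance in Proposition~\ref{prop:R} and formula \eqref{eq:Rf1} give, for every $z\in\sph^{n-1}$,
\[
(\RC f)(z,\psi) = \int_0^{2\cos\psi} U(r)(r\sin\psi)^{n-2}\int_{\sph^{n-1}\cap z^\perp} f\bkl{z(1-r\cos\psi)+r\sin\psi\,\theta}\ds(\theta)\dd r,
\]
the upper limit $2\cos\psi$ being the farthest $r$ for which $\cone(z,\psi)$ still meets $B_1(0)$. Applying Cauchy--Schwarz first on the angular $(n-2)$-sphere and then on $[0,2\cos\psi]$ (splitting the weight as $U(r)(r\sin\psi)^{n-2} = [U(r)(r\sin\psi)^{(n-2)/2}]\cdot[(r\sin\psi)^{(n-2)/2}]$), and recognising $(r\sin\psi)^{n-2}\ds(\theta)\dd r$ as the induced surface element on the cone, yields
\[
|(\RC f)(z,\psi)|^2 \leq |\sph^{n-2}|\,(\sin\psi)^{n-2}\,V(\cos\psi)\int_{\cone(z,\psi)\cap B_1(0)}|f|^2\ds,
\]
where $V(s)\coloneqq\int_0^{2s} U(r)^2 r^{n-2}\dd r$.

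Next I would integrate over $(z,\psi)\in\sph^{n-1}\times[0,\pi/2]$. The inner integral $\int_{\sph^{n-1}}\int_{\cone(z,\psi)\cap B_1(0)}|f|^2\ds\,\ds(z)$ is handled via the $O(n)$-equivariance of the map $(z,\theta)\mapsto z(1-r\cos\psi)+r\sin\psi\,\theta$, which for fixed $(r,\psi)$ pushes the product uniform measure on $\sph^{n-1}\times(\sph^{n-1}\cap z^\perp)$ to the constant multiple $|\sph^{n-2}|/\rho^{n-1}$ of surface measure on the sphere of radius $\rho(r,\psi)=\sqrt{1-2r\cos\psi+r^2}$. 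Changing variables on each branch $r=\cos\psi\pm\sqrt{\rho^2-\sin^2\psi}$ (with $\dd r = \rho\dd\rho/\sqrt{\rho^2-\sin^2\psi}$) and then $\sin\psi=\rho\sin t$ reduces the full bound to $\|\RC f\|_{L^2}^2 \leq C\int_0^1 g(\rho)G(\rho)\dd\rho$, with $g(\rho)\coloneqq\int_{\rho\sph^{n-1}}|f|^2\ds$ (so that $\int_0^1 g(\rho)\dd\rho = \|f\|_{L^2}^2$) and
\[
G(\rho) \leq C'\,\rho^{n-2}\int_0^{\pi/2}\frac{\sin^{2(n-2)}t}{\sqrt{1-\rho^2\sin^2 t}}\,V\bkl{\sqrt{1-\rho^2\sin^2 t}}\dd t.
\]

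The main obstacle is bounding $G$ uniformly on $[0,1]$, since $(1-\rho^2\sin^2 t)^{-1/2}$ is singular as $\rho\to 1$, $t\to\pi/2$. This is precisely where the hypothesis enters sharply: using $r\leq 2s$ inside $V(s)$,
\[
V(s) = \int_0^{2s} r\cdot r^{n-3}U(r)^2\dd r \;\leq\; 2s\int_0^2 r^{n-3}U(r)^2\dd r,
\]
and the factor $2s=2\sqrt{1-\rho^2\sin^2 t}$ cancels the square-root singularity, leaving $G(\rho)\leq C''\int_0^{\pi/2}\sin^{2(n-2)}t\dd t < \infty$. Integrating yields $\|\RC f\|_{L^2}^2\leq c^2\|f\|_{L^2}^2$ on $\xx$; density and the BLT theorem provide the bounded extension $\RC\colon\XX\to\YY$, and its adjoint coincides with the continuous extension of $\RC^\sharp$ from Proposition~\ref{prop:adj}, which is therefore also bounded.
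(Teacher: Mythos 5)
Your proof is correct, but it takes a genuinely different and longer route than the paper's. The paper applies Cauchy--Schwarz to \eqref{eq:Rf1} with the split $U(r)\,r^{n-2}=\bigl[U(r)\,r^{(n-3)/2}\bigr]\cdot\bigl[r^{(n-1)/2}\bigr]$, which pulls out the hypothesis integral $\int_0^2 r^{n-3}U(r)^2\,\dd r$ as a constant \emph{independent of $\psi$} and leaves the factor $\int_0^2\int_{\sph^{n-2}}r^{n-1}\abs{f(1-r\cos\psi,\,r\sin\psi\,\eta)}^2\,\ds(\eta)\,\dd r$; after multiplying by $(\sin\psi)^{2(n-2)}\le(\sin\psi)^{n-2}$ and integrating over $\psi$, this is bounded by $\norm{f}_{L^2}^2$ simply via polar coordinates centred at the vertex --- no coarea argument and no singularity to cancel. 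Your split $U(r)(r\sin\psi)^{n-2}=\bigl[U(r)(r\sin\psi)^{(n-2)/2}\bigr]\cdot\bigl[(r\sin\psi)^{(n-2)/2}\bigr]$ instead keeps the whole weight $U(r)^2r^{n-2}$ in one factor, so the ``constant'' becomes $\psi$-dependent through $V(\cos\psi)$ and the other factor is a surface integral of $\abs{f}^2$ over the cone; you then need the equivariant pushforward onto origin-centred spheres (the constant $\abs{\sph^{n-2}}/\rho^{n-1}$ is right), the two-branch substitution $r=\cos\psi\pm\sqrt{\rho^2-\sin^2\psi}$ with $\sum_{\pm}\bigl(\cos\psi\pm\sqrt{\rho^2-\sin^2\psi}\bigr)^{n-2}\le 2^{n-1}$, and the bound $V(s)\le 2s\int_0^2 r^{n-3}U(r)^2\,\dd r$ to cancel the $(1-\rho^2\sin^2 t)^{-1/2}$ singularity. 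All of these steps check out, and both arguments consume the hypothesis at exactly one point; what your version buys is a more explicit picture of where integrability of $r^{n-3}U(r)^2$ is genuinely needed (taming the near-tangency regime $\rho\to1$, $t\to\pi/2$), at the cost of substantially more computation. The concluding density/BLT step and the boundedness of the adjoint are the same as in the paper.
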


\begin{proof}
Let  $\pp \in \sph^{n-1}$ and $\Qm \in O(n)$ satisfy  $\Qm e_1 = \pp$.
By Proposition \ref{prop:R}, we have
 \begin{align*}
	\lVert &(\RC \f)(\pp, \edot ) \rVert^2_{L^2}
     \\
    & = \int_{0}^{\pi/2}
          \abs{\RC(\f \circ \Qm )(e_1, \psi )}^2 \dd \psi \\
     &=  \int_{0}^{\pi/2} (\sin (\psi))^{2(n-2)}
     \\
    &  \hspace{0.1\textwidth}
    \times  \biggl\lvert \int_0^2
    \int_{\sph^{n-2}}
    U(r) r^{n-2}
    (\f \circ \Qm)  \left(1 -  r \cos(\psi), r \sin(\psi) \eta \right)
\ds(\eta)  \dd r \biggr\rvert^2 \dd \psi
 \\ &\leq
 \abs{\sph^{n-2}}
\biggl(\int_0^2  r^{n - 3} U(r)^2 \dd r \biggr)
\biggl( \int_{0}^{\pi/2} (\sin (\psi))^{2(n-2)}
\\
 &  \hspace{0.1\textwidth}
  \times \int_0^2 \int_{\sph^{n-2}}
r^{n-1 } \abs{(\f \circ \Qm)  \left(1 -  r \cos(\psi), r \sin(\psi) \eta \right) }^2
\ds(\eta)  \dd r  \dd \psi \biggr)\,.
 \\ &\leq
 \abs{\sph^{n-2}}
 \biggl(\int_0^2  r^{n - 3} U(r)^2 \dd r \biggr)
 \norm{f}_{L^2}^2
 \,.
\end{align*}
Integration over  $\pp \in \sph^{n-1}$ and using that
$\int_0^2  r^{n - 3} U(r)^2 \dd r$ is finite yields the claimed
estimate.  Because  $\xx \subseteq \XX$ is dense, the
operator can  be extended  to a bounded operator on $\XX$
in a unique way.  The adjoint is therefore bounded, too.
\end{proof}

The continuity  of  $\RC$   together with  standard results for convex variational regularization
implies the following.

\begin{theorem}[Existence,  stability and convergence]\label{thm:tik}
Let $\reg \colon \XX  \to [0, \infty]$ be
proper, convex, coercive and weakly lower semi-continuous,
and suppose $r \mapsto r^{n - 3} U(r)^2$ is integrable over $(0,2)$.
Then the following hold:

\begin{enumerate}
\item
For every $\g \in \YY$  and $\al >0$,
$\tik_{\g, \alpha}$ has at least one minimizer.

\item
Let $\al>0$, $\g  \in \YY$, and let $(\g_k)_{k \in \N} \in \YY^\N$
satisfy   $\norm{\g - \g_k}_{L^2} \to 0$. Then,
every sequence $\f_k \in \argmin \tik_{\g_k, \alpha}$
 has a weakly convergent subsequence $(\f_{\tau(k)})_{k \in \N}$, and its limit $\f$
  is a minimizer of $\tik_{\g, \alpha}$ with  $\reg(\f_{\tau(k)}) \to
\reg(\f)$ for $k \to \infty$.

\item
Let $\g \in \ran(\RC)$, $\kl{\delta_k}_{k\in \N} \in (0, \infty)^\N$
converge to zero, and let $(\g_k)_{k\in \N} \in \YY^\N$ satisfy  $\norm{\g - \g_k} \leq \delta_k$.
Suppose further that $(\alpha_k)_{k\in \N} \in (0, \infty)^\N$
satisfies   $\alpha_k \to 0$ and $\delta_k^2/\alpha_k \to  0$ as $k \to \infty$,
and choose $\f_k \in  \argmin \tik_{\g_k,\alpha_k}$ for every $k \in \N$.
\begin{itemize}[topsep=0em,itemsep=0em]
\item $(\f_k)_{k\in \N}$  has a weakly converging subsequence.
\item
The limit  $\f$  of every weakly  convergent subsequence
$(\f_{\tau(k)})_{\ell\in \N}$  of $(\f_k)_{k\in \N}$ is a $\reg$-minimizing solution
of  $\RC(\f) = \g$ and  satisfies $\reg(\f_{\tau(k)}) \to \reg(\f)$.

\item If the $\reg$-minimizing solution of $\RC( \f) = \g$
is unique, then $\f_k \rightharpoonup \f$.
\end{itemize}
\end{enumerate}
\end{theorem}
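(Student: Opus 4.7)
The plan is to invoke the standard machinery of convex variational regularization in reflexive Hilbert spaces. Since $\XX = L^2(B_1(0))$ is reflexive, $\RC \colon \XX \to \YY$ is bounded linear (hence weakly continuous) by Lemma~\ref{lem:Rcont}, and $\reg$ is proper, convex, coercive and weakly lower semi-continuous by assumption, each of the three claims reduces to a direct method argument on a minimizing sequence. The three parts differ only in which sequence is chosen and which inequality is passed to the limit.

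For existence, $\tik_{\g,\al}$ is proper, bounded below by zero, and weakly lower semi-continuous: the data term is wlsc because $\norm{\edot}_{L^2}$ is wlsc and $\RC$ is weakly continuous, and the penalty term is wlsc by assumption. Coercivity of $\tik_{\g,\al}$ follows from $\al > 0$ and coercivity of $\reg$. A minimizing sequence is therefore bounded in $\XX$, and reflexivity produces a weakly convergent subsequence whose limit attains the infimum.

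For stability, let $\f_k \in \argmin \tik_{\g_k,\al}$. Comparing with any fixed $\bar \f \in \XX$ satisfying $\reg(\bar \f) < \infty$ (available by properness) yields $\al \reg(\f_k) \le \tik_{\g_k,\al}(\bar \f)$, which is uniformly bounded because $\g_k \to \g$. Coercivity of $\reg$ then bounds $(\f_k)$ in $\XX$, so a weakly convergent subsequence $\f_{\tau(k)} \rightharpoonup \f$ exists. Passing to the limit in the inequality $\tik_{\g_k,\al}(\f_{\tau(k)}) \le \tik_{\g_k,\al}(\f')$ for arbitrary $\f' \in \XX$, using strong convergence $\g_k \to \g$ together with weak lower semi-continuity of both summands, shows that $\f$ minimizes $\tik_{\g,\al}$. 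The convergence $\reg(\f_{\tau(k)}) \to \reg(\f)$ is then extracted by applying the liminf inequality separately to the data term and to $\reg$ and observing that equality in the limit forces equality in each of the two summands.

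For convergence, the assumption $\g \in \ran(\RC)$ together with coercivity and weak lsc of $\reg$ guarantees existence of an $\reg$-minimizing solution $\f^\dagger$. The minimizing property of $\f_k$ gives
\begin{equation*}
\tfrac{1}{2}\norm{\RC \f_k - \g_k}_{L^2}^2 + \alpha_k \reg(\f_k)
\;\le\; \tfrac{1}{2}\delta_k^2 + \alpha_k \reg(\f^\dagger) \,,
\end{equation*}
so dividing by $\alpha_k$ and exploiting $\delta_k^2/\alpha_k \to 0$ yields $\limsup_k \reg(\f_k) \le \reg(\f^\dagger)$, while the same inequality itself gives $\norm{\RC \f_k - \g_k}_{L^2} \to 0$ and hence $\RC \f_k \to \g$ strongly. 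Coercivity produces a weak cluster point $\f$; weak continuity of $\RC$ forces $\RC \f = \g$, and weak lsc of $\reg$ combined with the upper bound yields $\reg(\f) \le \liminf_k \reg(\f_{\tau(k)}) \le \reg(\f^\dagger)$, so $\f$ is itself $\reg$-minimizing and all inequalities collapse to equalities, producing $\reg(\f_{\tau(k)}) \to \reg(\f)$. When the $\reg$-minimizing solution is unique, the standard subsequence–subsubsequence argument upgrades weak subsequential convergence to weak convergence of the whole sequence. The main obstacle is not conceptual but bookkeeping: one must carefully match the abstract regularization hypotheses to exactly what Lemma~\ref{lem:Rcont} supplies, and in part (b) combine the liminf inequality for the residual term with the one for $\reg$ in the right order to obtain convergence of $\reg(\f_{\tau(k)})$.
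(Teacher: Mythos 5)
Your proposal is correct and follows exactly the route the paper takes: the paper's proof simply combines the boundedness of $\RC$ from Lemma~\ref{lem:Rcont} with the general theory of convex variational regularization in~\cite{scherzer2009variational}, and your write-up is precisely the standard direct-method argument behind those cited results. The only difference is that you spell out the details the paper delegates to the reference.
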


\begin{proof}
Follows from the boundedness $\RC$ derived in Proposition \ref{lem:Rcont} together with general results for  convex variational  regularization~\cite{scherzer2009variational}.
\end{proof}

Note that the uniqueness of a $\reg$-minimizing solution of $\RC( \f) = \g$
 is guaranteed if   its solution is unique.  Uniqueness  of the
  $\reg$-minimizing solution is also guaranteed in the case where $\reg$ is strictly convex. This property is satisfied,  for example, for standard Tikhonov regularization
 where $\reg(\f) = \norm{\f}_{L^2}^2$,  or for $\ell^q$-regularization where
 $\reg(\f) = \sum_{\la \in \La} \abs{\ip{\f_\la}{\varphi_\la}}^q$ with $q>1$ and some 
 frame  $(\varphi_\la)_\la$ of $\XX$.

\subsection{Iterative minimization}

In the numerical results presented below, we
consider  the following instances for the regularizer   $\reg \colon \XX  \to [0, \infty]$:
\begin{itemize}
	\item
	$\reg(\f) = \frac{1}{2} \norm{\f}^2_{L^2}$
    ($L^2$-regularization);	
	
	\item
	$\reg(\f) = \frac{1}{2} \norm{\Do \f}^2_{L^2}$
    ($H^1$-regularization);
	
	\item
	$\reg(\f) =   \norm{ \Do \f}_{L^1}$
    (TV-regularization).
\end{itemize}
Additionally, we consider any of these methods with an added
convex constraint. All resulting regularization functionals are proper, convex,
coercive and weakly lower semi-continuous.

For numerically minimizing the Tikhonov functional  \eqref{eq:tik},
we consider its discrete counterparts. For that purpose, let
$\fnum \in (\R^{N+1})^{\otimes n}$ be the discrete phantom,
$\gnum^\delta \in \R^{P \times (Q+1)} $ discrete data and
$\Cnum \colon (\R^{N+1})^{\otimes n} \to \R^{P \times (Q+1)}
$  the discretization of the forward operator. For all considered
regularizers, we can write the resulting discrete Tikhonov functional
in the form
    \begin{equation} \label{eq:tikN}
	\Tnum (\fnum)
	\coloneqq \frac{1}{2} \norm{ \Cnum  \fnum -  \gnum^\delta}_2^2
	+ \frac{\al}{q} \norm{   \Lnum \fnum  }_q^q +  I_\cset (\fnum)\,.
\end{equation}
Here
  $I_\cset $ denotes the indicator of some convex  set $\cset \subseteq  (\R^{N+1})^{\otimes n}$
 defined by $I_\cset(\fnum) = 0 $ if  $\fnum \in \cset$ and $I_\cset(\fnum) =\infty $ else.
In particular, in the case that we take
$\cset = ([0, \infty)^{N+1})^{\otimes n}$  it guarantees non-negativity.
The mapping  $\Lnum \in \set{\Dnum, \Inum}$ stands either  for the discrete gradient
$\Dnum \colon  (\R^{N+1})^{\otimes n} \to  ((\R^{N+1})^{\otimes n} )^2$
 in the case of $H^1$-regularization and TV-regularization, or for the
identity  operator $\Inum$  on $(\R^{N+1})^{\otimes n}$
 in the case  of $L^2$-regularization. The parameter $q\in \set{1, 2}$ is
 taken $q=1$ in the case  of TV regularization, and $q=2$ in the cases of
 $L^2$-regularization
 or $H^1$-regularization.

\begin{algorithm}
\caption{Chambolle-Pock\label{alg:cp} Algorithm for minimizing the functional \eqref{eq:tikN} in case of constrained  TV regularization}
\label{alglstv}
\begin{algorithmic}[1]
\STATE
Choose $a \leq \|( \Cnum,  \Dnum)\|_2$;
$\tau \gets 1/a$;
$\sigma \gets1/a$;
$\theta \gets 1$;
$ k \gets 0$
\STATE Initialize $\fnum_0$, $\pnum_0$, and $\qnum_0$ to zero values
\STATE $\unum_0 \gets \fnum_0$
\WHILE{stopping criteria not satisfied}
\STATE $\pnum_{k+1} \gets (\pnum_k + \sigma( \Cnum \unum_k - \gnum^\delta))/(1+\sigma)$
\STATE $\qnum_{k+1} \gets \alpha (\qnum_k  + \sigma \Dnum \unum_k )/
\max \set{ \alpha \mathbf{1},|\qnum_k + \sigma \Dnum \unum_k |}$
\STATE $\fnum_{k+1} \gets  \Pnum_\cset(\fnum_k - \tau \Cnum^* \pnum_{k+1} - \tau \Dnum^*   \, \qnum_{k+1}) $
\STATE $\unum_{k+1} \gets \fnum_{k+1} + \theta(\fnum_{k+1} - \fnum_k)$
\STATE $k \gets k+1$
\ENDWHILE
\end{algorithmic}
\end{algorithm}

The Tikhonov functional~\eqref{eq:tikN} can  be minimized  by
various convex optimization methods~\cite{CombPes11}.
In this work we use the minimization algorithm  of \cite{sidky2012convex},
which is a special instance of the  Chambolle-Pock
primal-dual algorithm \cite{chambolle2011}.
It can be applied to any instance of regularization functional that we consider in this paper.
 For TV-minimization with convex constraint, the resulting algorithm is  summarized
in  Algorithm~\ref{alg:cp}. Here $\norm{\edot}_2$ denotes the matrix norm induced by the Euclidean norm, $\mathbf{1}$ is the matrix with all entries set to $1$ and $\Pnum_\cset$ denotes the projection on the convex set $\cset$.
In the case of $L^2$-regularization and
$H^1$-regularization, the algorithms look similar, only   line 6 has to be  replaced  by
the update rule
\begin{equation*}
\qnum_{k+1} \gets \frac{ \alpha (\qnum_k + \sigma \Lnum \unum_k) }{\alpha +  \sigma}\,.
\end{equation*}
For motivation and a derivation of  Algorithm~\ref{alg:cp} as well as a convergence analysis
we refer to the original  papers
\cite{chambolle2011, sidky2012convex}.

\subsection{Numerical simulations}

For the presented numerical results, we assume weight $U(r) = e^{-\mu r }$
and consider the two dimensional case. In this case,
the weighted conical Radon transform reduces to the attenuated V-line transform 
\begin{multline} \label{eq:Vtrafo}
(\RC \f) (\ph, \psi)
=
\\ \sum_{\sigma = \pm 1}\int_0^\infty
\f \kl{ (\cos(\ph),\sin(\ph))
- r(\cos(\ph-\sigma\psi),\sin(\ph-\sigma\psi))  } e^{-\mu r } \dd r \,,
\end{multline}
where $\ph \in [0, 2\pi)$  and $\psi \in [0, \pi/2]$.
The $V$-line transform \eqref{eq:Vtrafo}
corresponds to transforms studied in
\cite{schiefeneder2017radon} and \cite{moon2017analytic}.

\begin{figure} \centering
\includegraphics[width=0.45\textwidth]{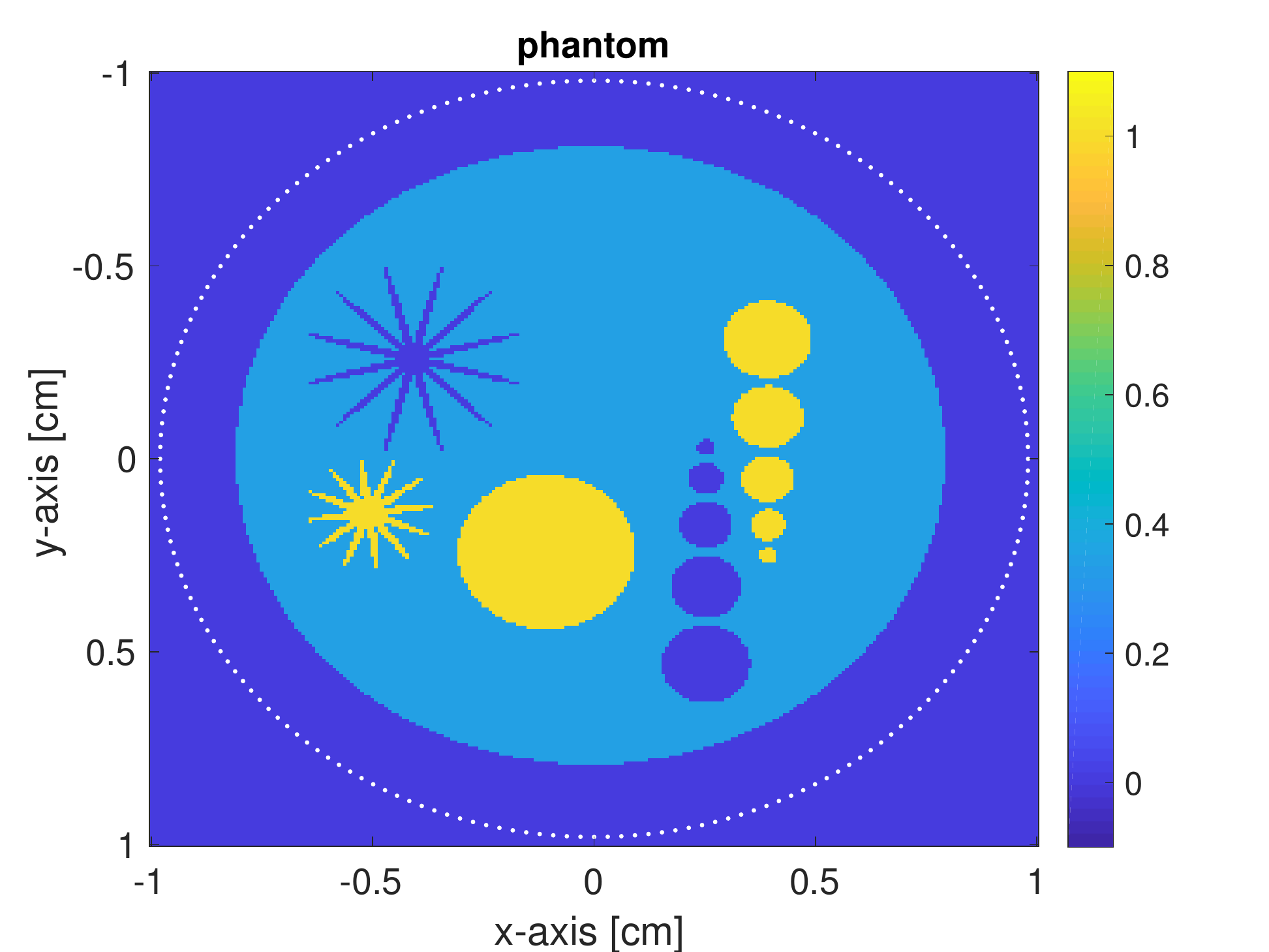}
\includegraphics[width=0.45\textwidth]{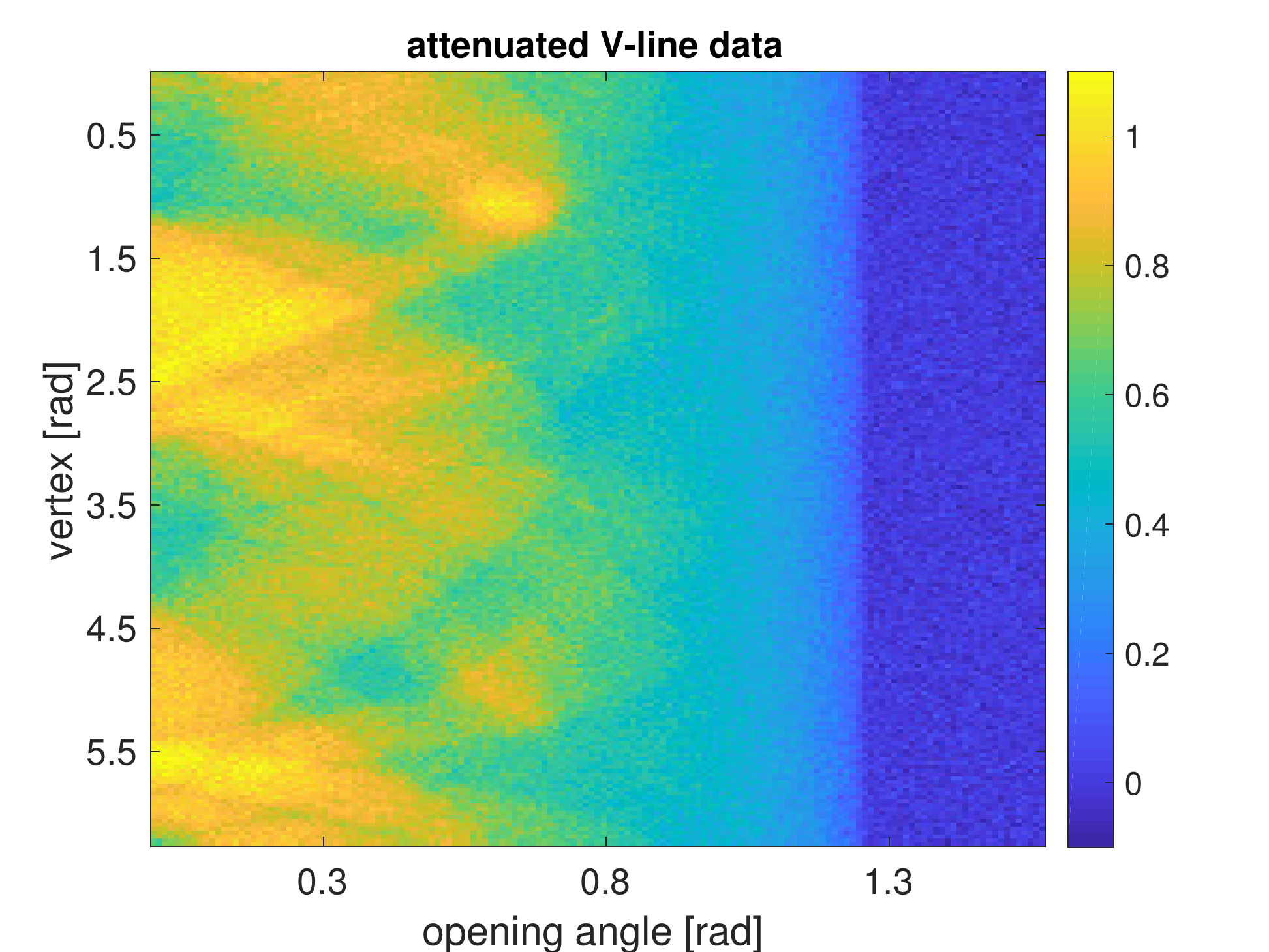}
\caption{\textbf{Phantom and noisy data used for the numerical results}.
Left: The numerical phantom $f \in \fnum \in \R^{257 \times 257}$
consists of the superposition of ellipse and two star shaped tumor
like   structures. Right: Simulated data $\gnum^\delta =  \Cnum \fnum + \noisenum \in \R^{200 \times 151}$
with $5\%$ noise added.}\label{fig:phant}
\end{figure}

\subsubsection*{Discrete forward and adjoint operator}

The  discrete forward and adjoint operators are defined
by considering the entries $\fnum[i_1,i_2] $  as sampled values of
a  function $f$  at locations
$(-1,-1) +   2(i_1, i_2)/N$ for $(i_1, i_2) \in \set{0, \dots, N}^2$
and  replacing the function $f$ by a bilinear interpolant $T[\fnum]$.
The attenuated V-line transform $\Cnum  \fnum = \gnum$ is  discretized by  numerically computing
the integral of the interpolant $T[\fnum]$  over each of the two branches of the V-lines with the composite
trapezoidal rule. We take
  $(\cos(\ph[k]),\sin(\ph[k]))$  with  $\ph[k] = 2\pi (k-1)  / P $  for $k \in \set{1, \dots, P}$
for the vertex positions,  and  $\psi[\ell] = \pi \,  \ell/(2   Q) $ with $\ell \in \set{0, \dots, Q}$
for the opening angles. For the numerical integration of each branch,
we use $N+1$ equidistant radii in the interval $[0,2]$.
The discrete backprojection operator is defined by using the trapezoidal rule
similar to the well-known approach   for the classical Radon transform \cite{natterer2001}.

For the following numerical results we use $N=256$, $P=200$, $Q=  150$, 
and  $\mu =  0.5$.
The used discrete phantom $\fnum$ and the numerically computed   data
$\Cnum \fnum$ with added Gaussian  noise are shown in Figure~\ref{fig:phant}.

\begin{figure}\centering
\includegraphics[width=0.35\textwidth]{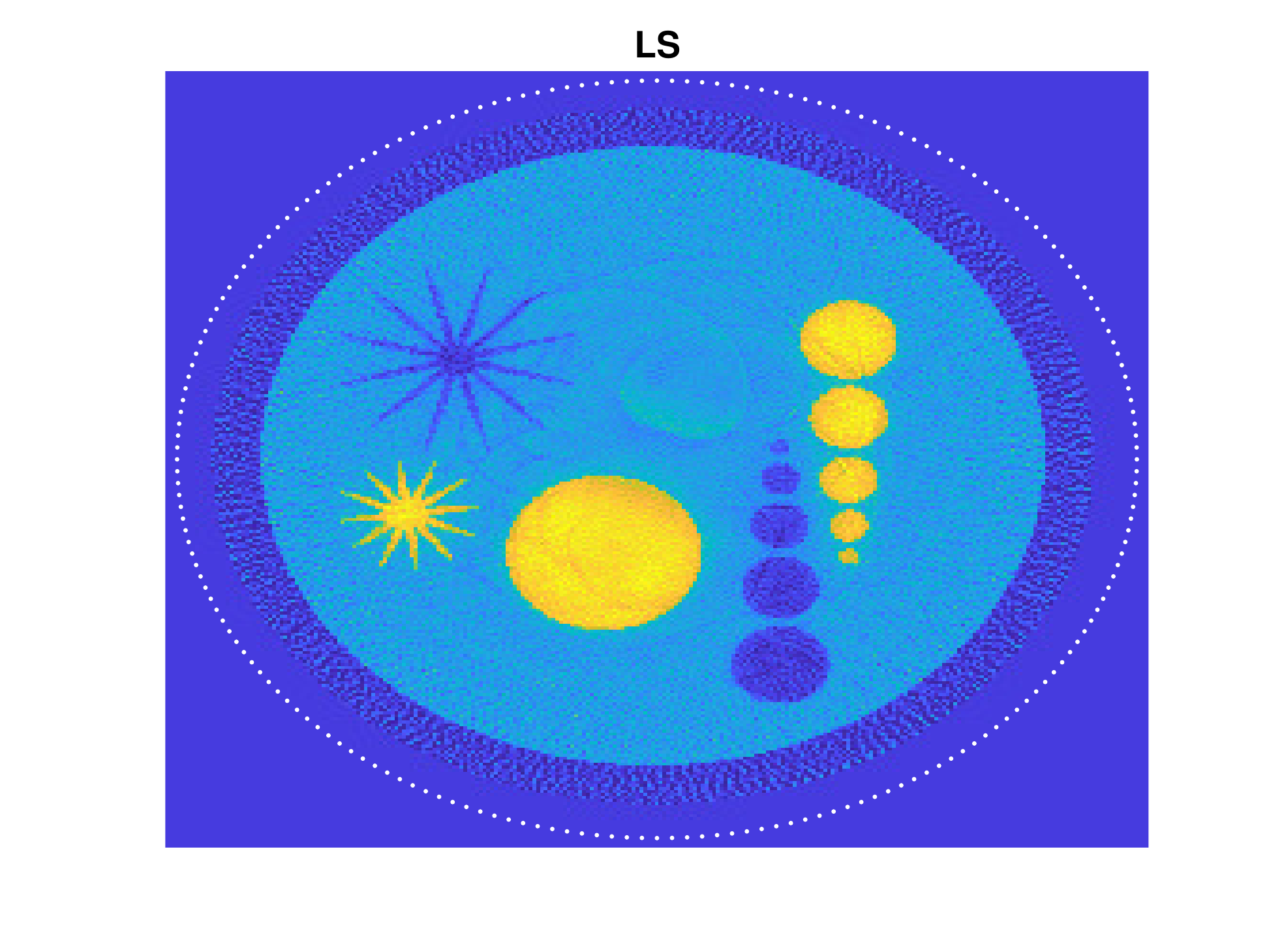}
\includegraphics[width=0.35\textwidth]{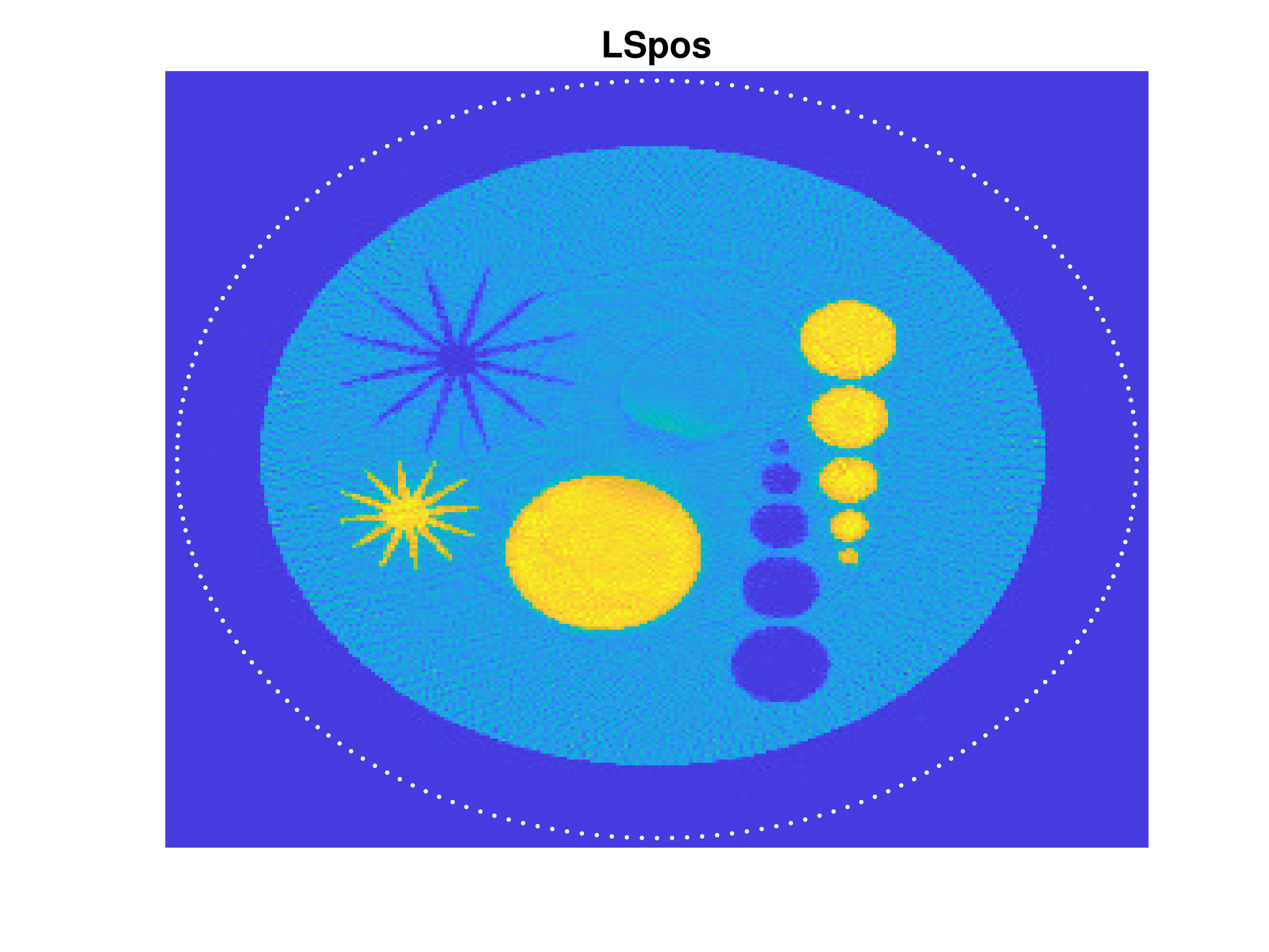}
\includegraphics[width=0.35\textwidth]{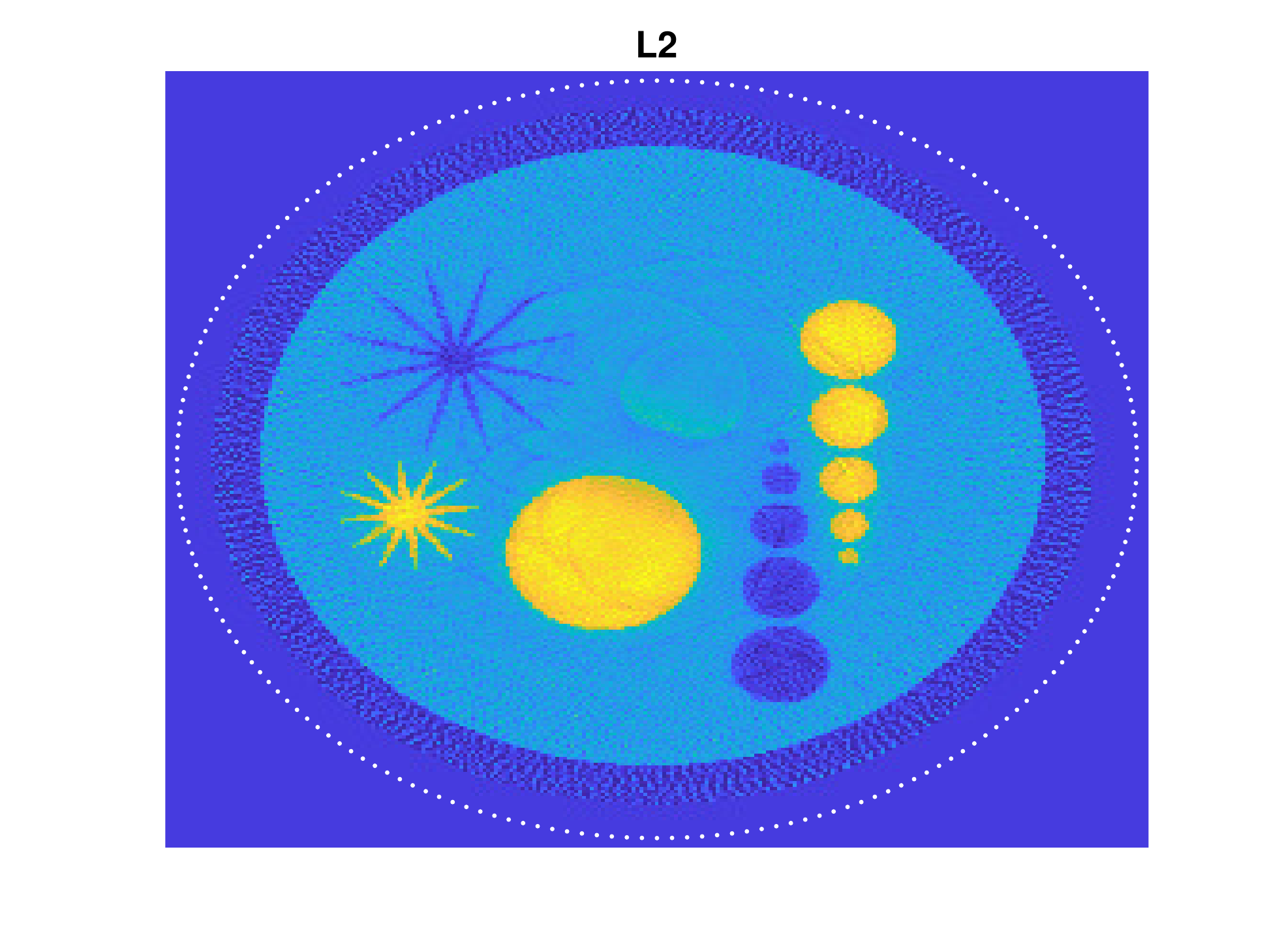}
\includegraphics[width=0.35\textwidth]{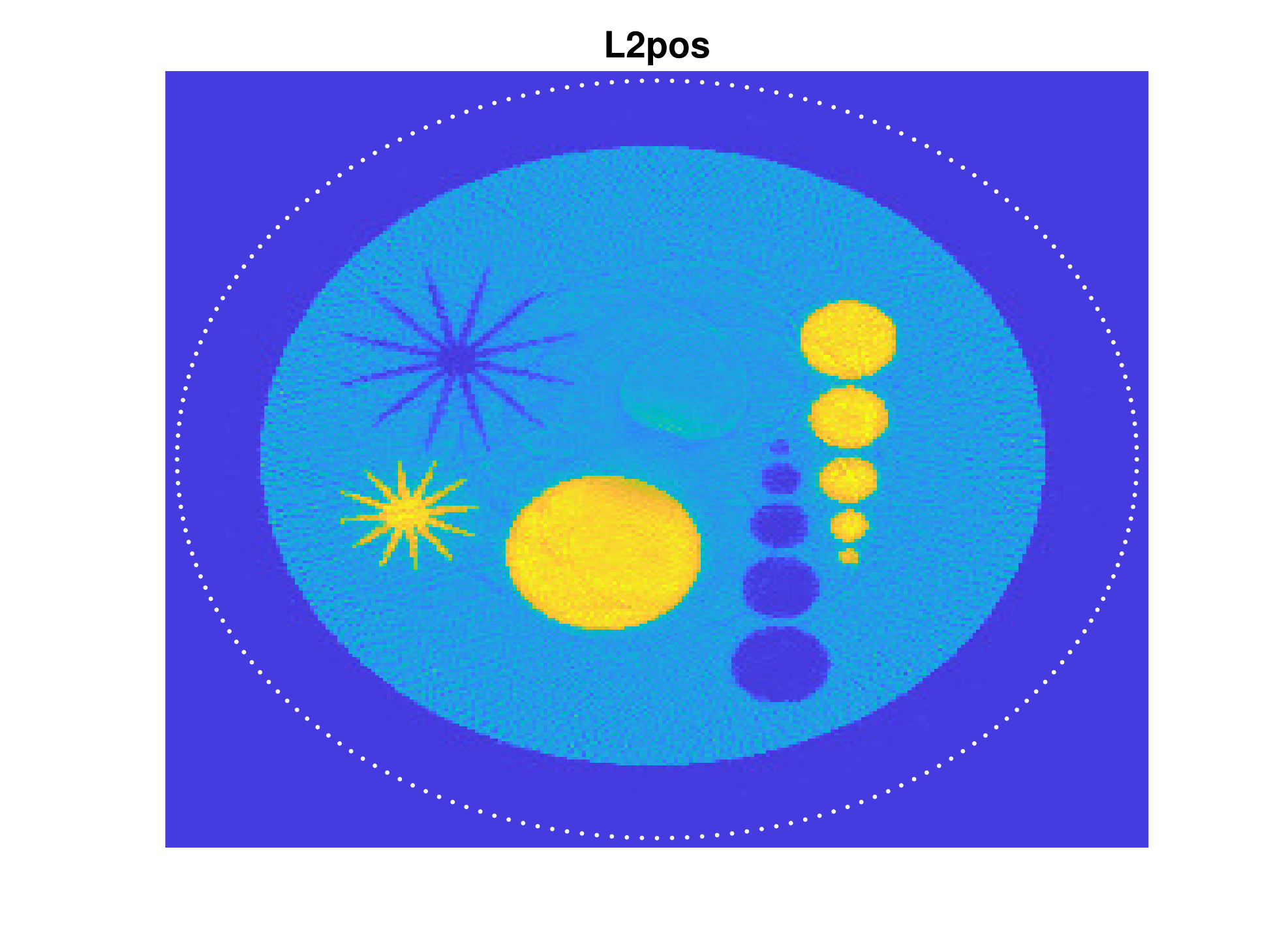}
\includegraphics[width=0.35\textwidth]{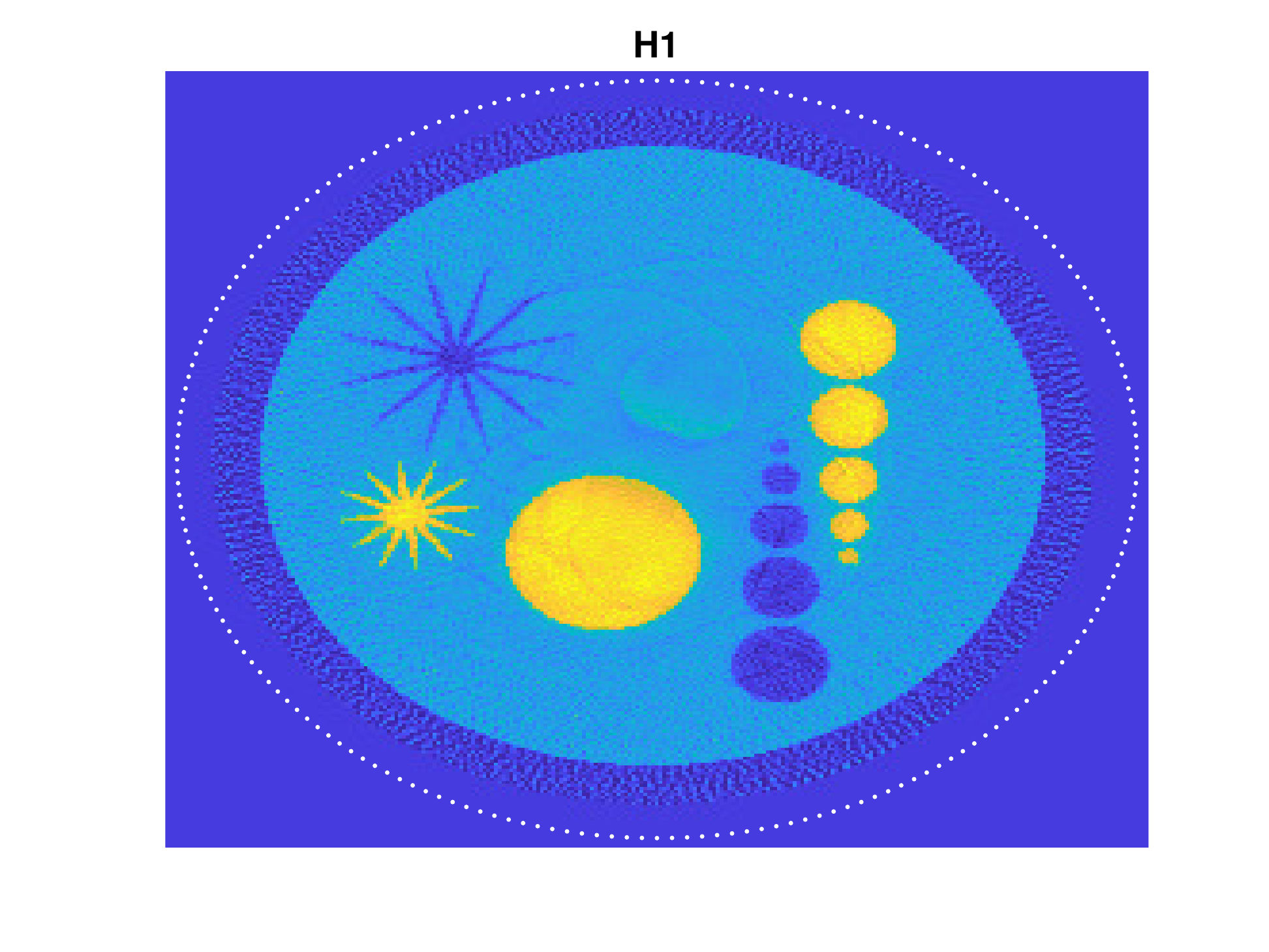}
\includegraphics[width=0.35\textwidth]{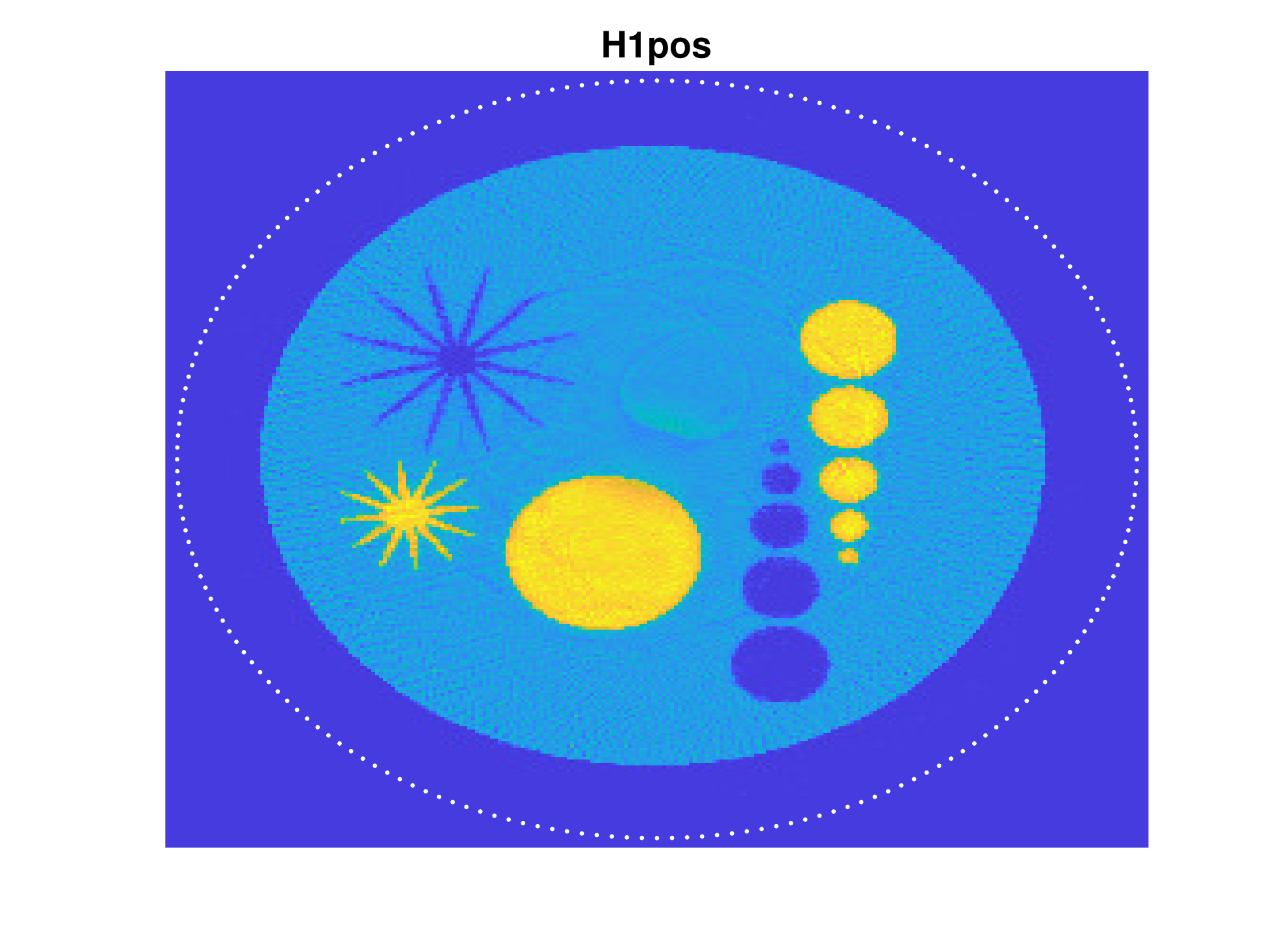}
\includegraphics[width=0.35\textwidth]{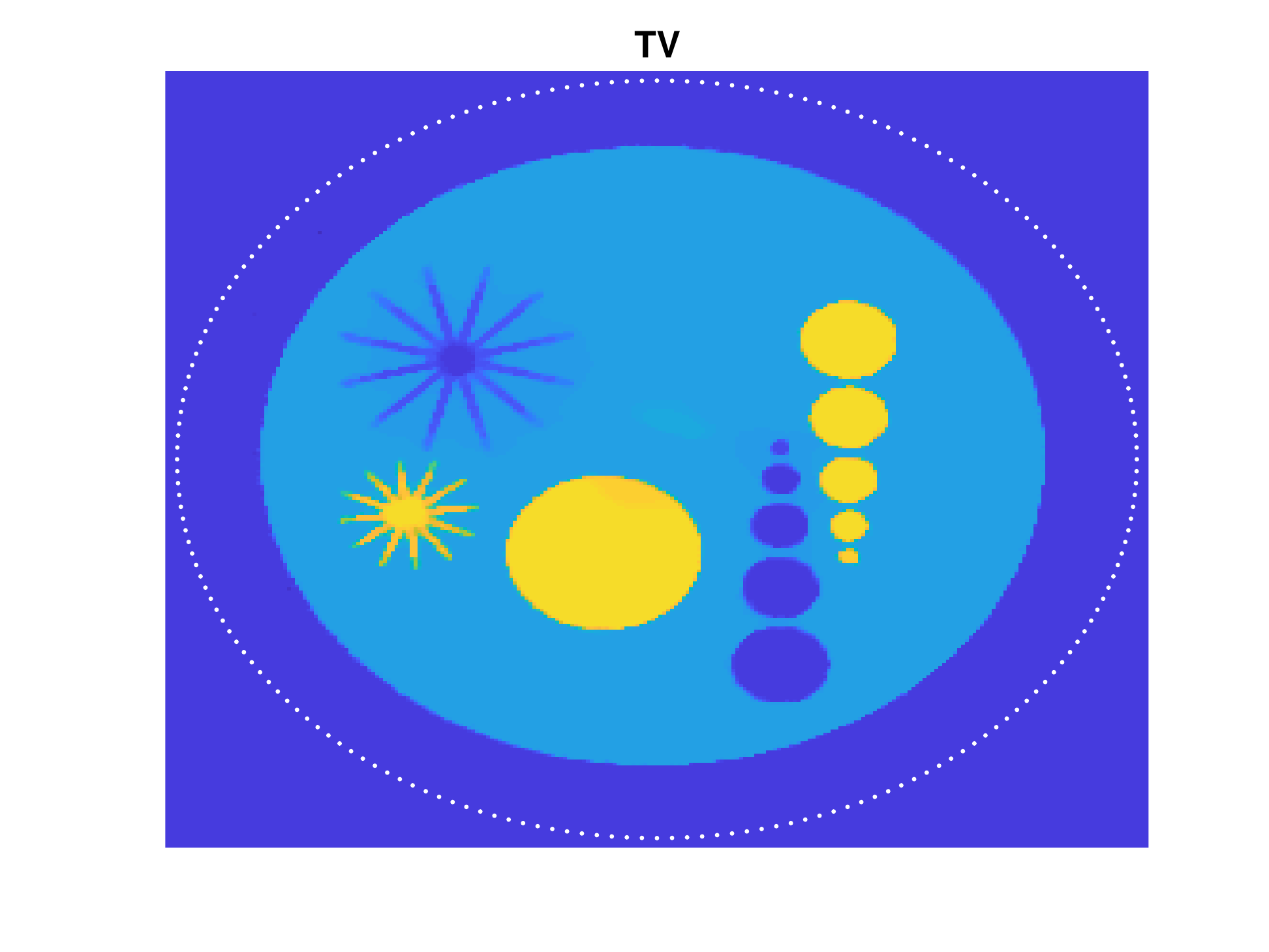}
\includegraphics[width=0.35\textwidth]{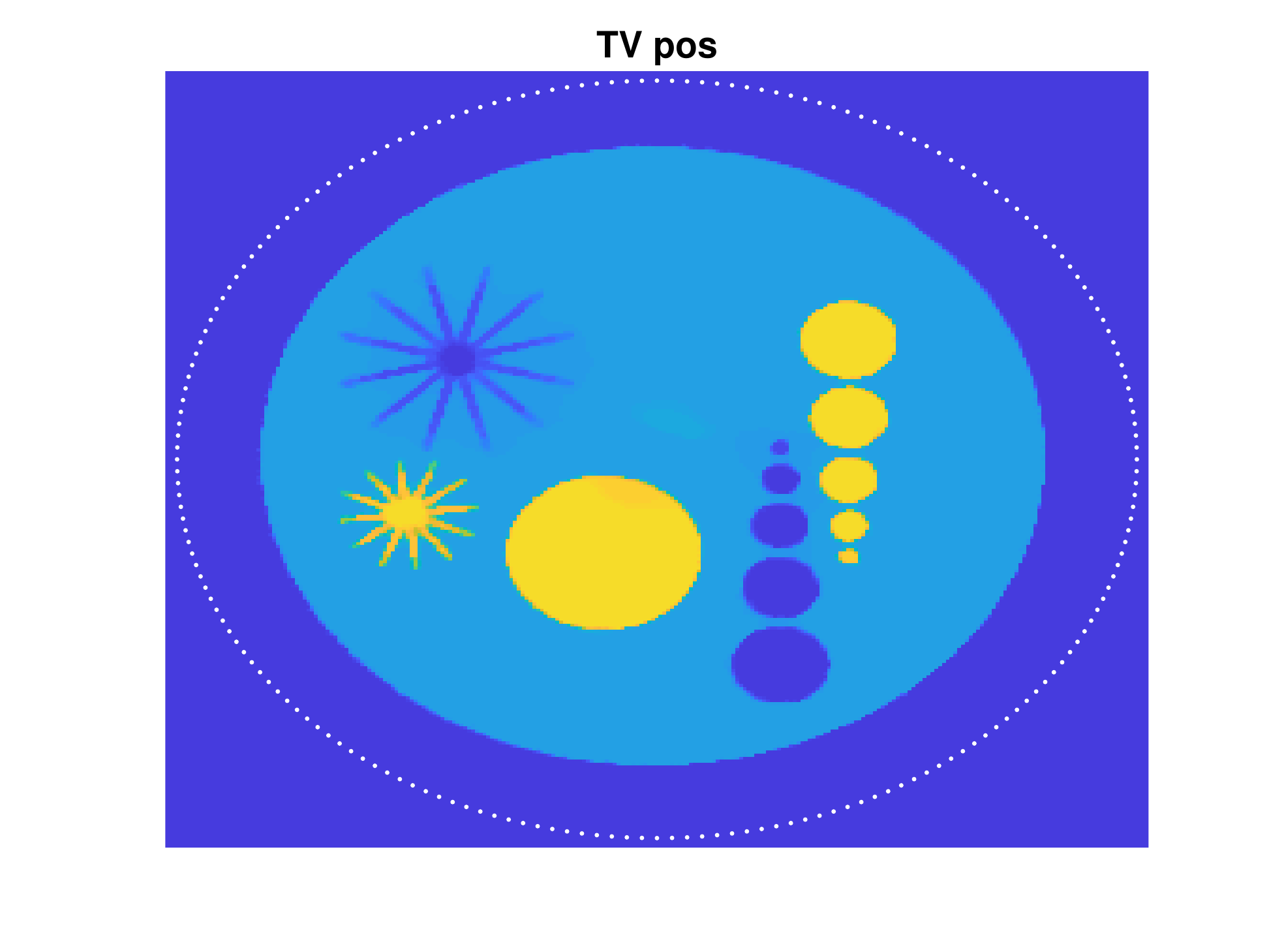}
\includegraphics[width=0.35\textwidth]{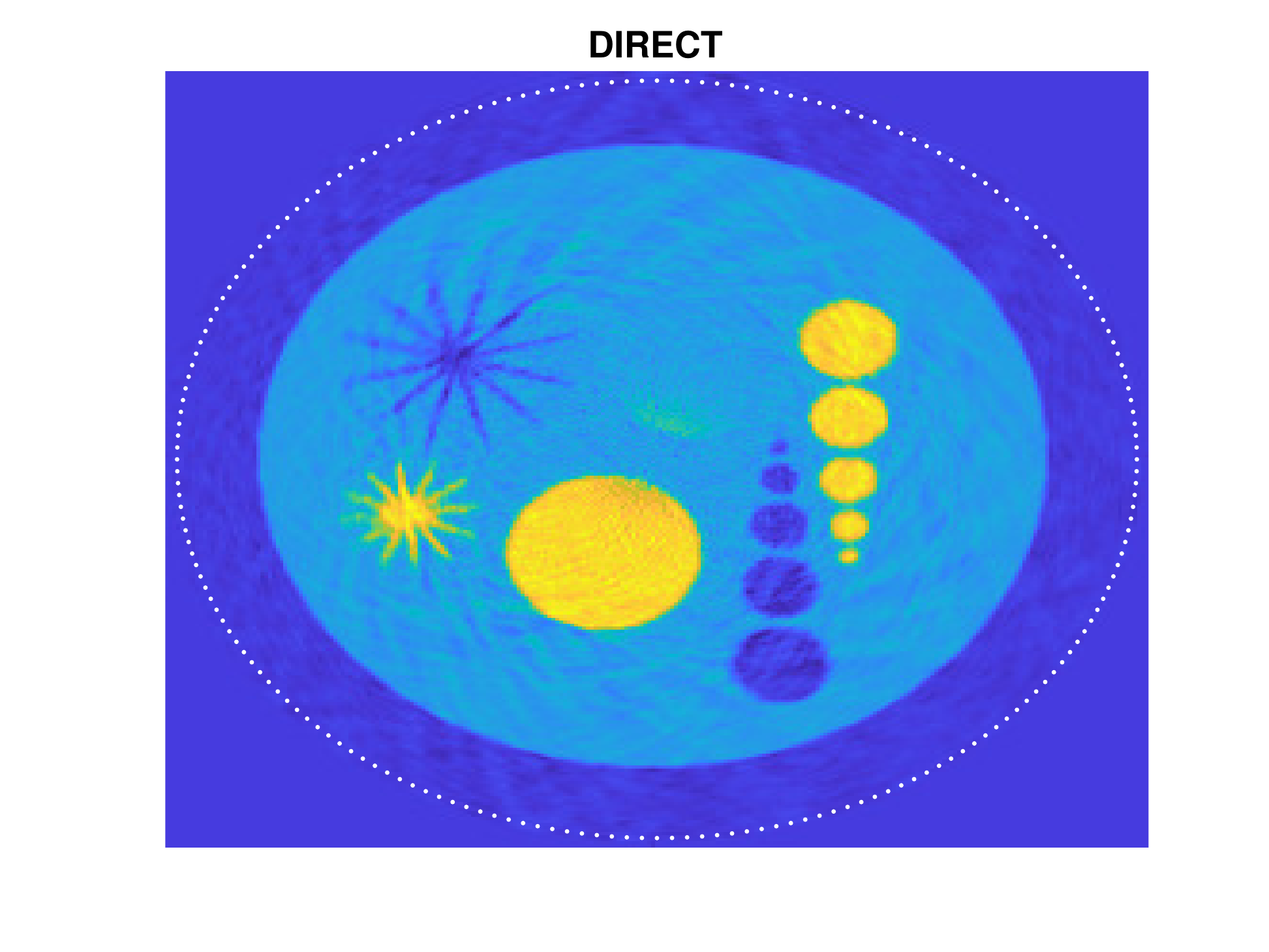}
\includegraphics[width=0.35\textwidth]{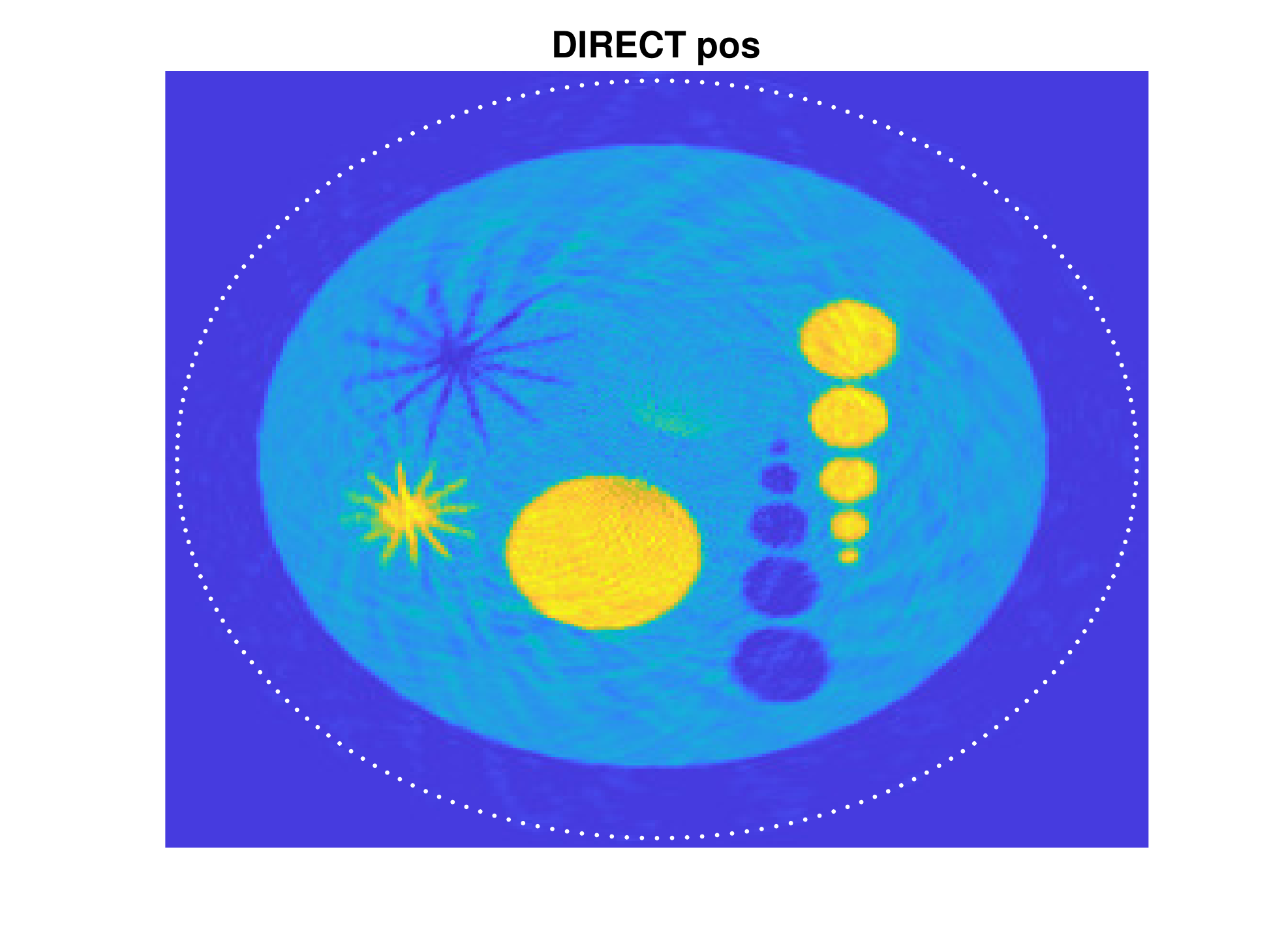}
\caption{\textbf{Reconstructions from exact data.}
Top left: least squares.
Top right: positivity constraint least squares.
Line 2, left: $L^2$-regularization.
Line 2, right: $L^2$-regularization with positivity constraint.
Line 3, left: $H^1$-regularization.
Line 3, right: $H^1$-regularization with positivity constraint.
Line 4, left: TV-regularization.
Line 4, right: TV-regularization with positivity constraint.
Bottom,  left: Fourier reconstruction from  \cite{haltmeier2017inversion}.
Bottom,  right: Nonnegative part of Fourier reconstruction.
}\label{fig:recE}
\end{figure}

\begin{figure}\centering
\includegraphics[width=0.45\textwidth]{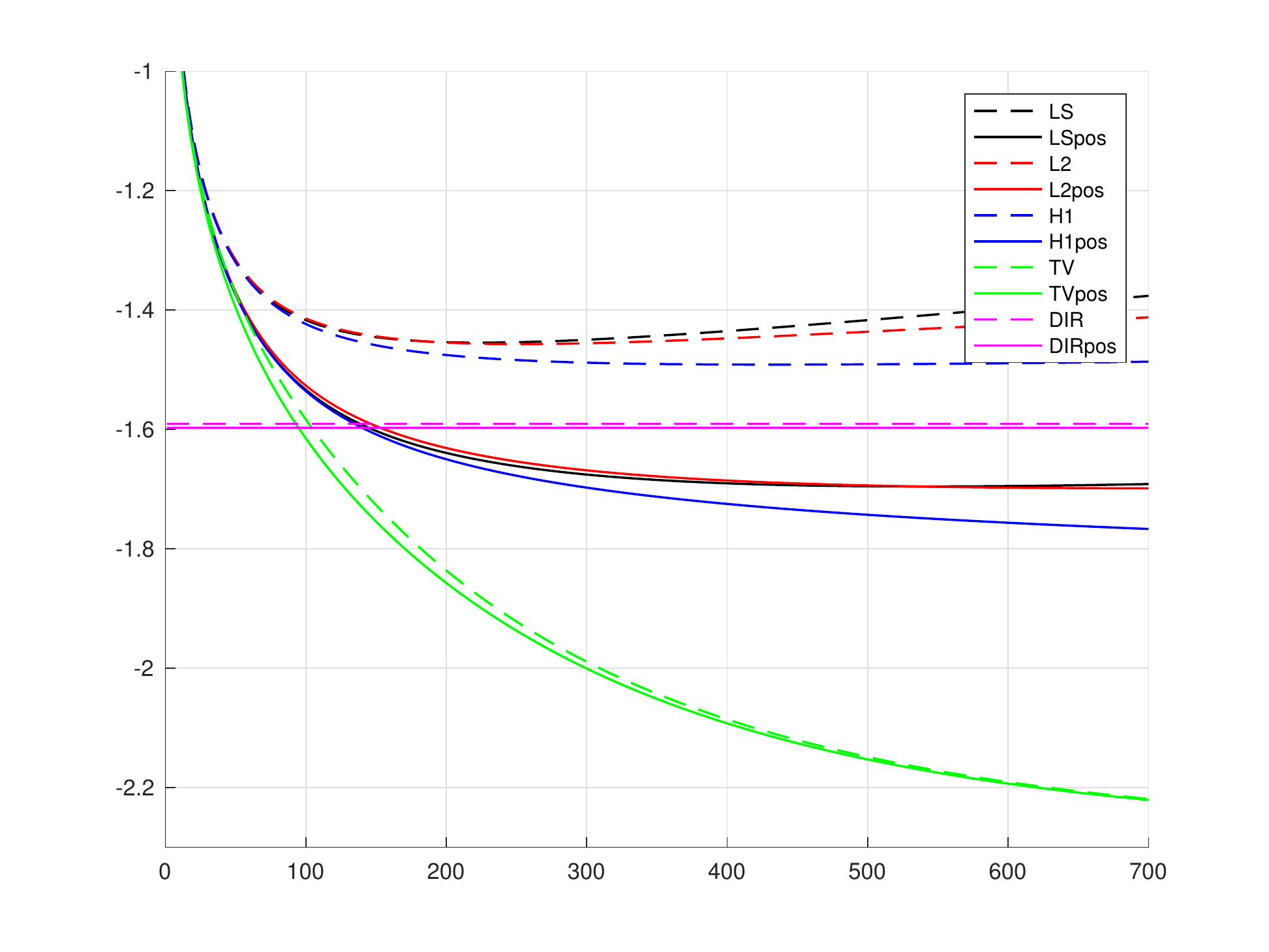}
\includegraphics[width=0.45\textwidth]{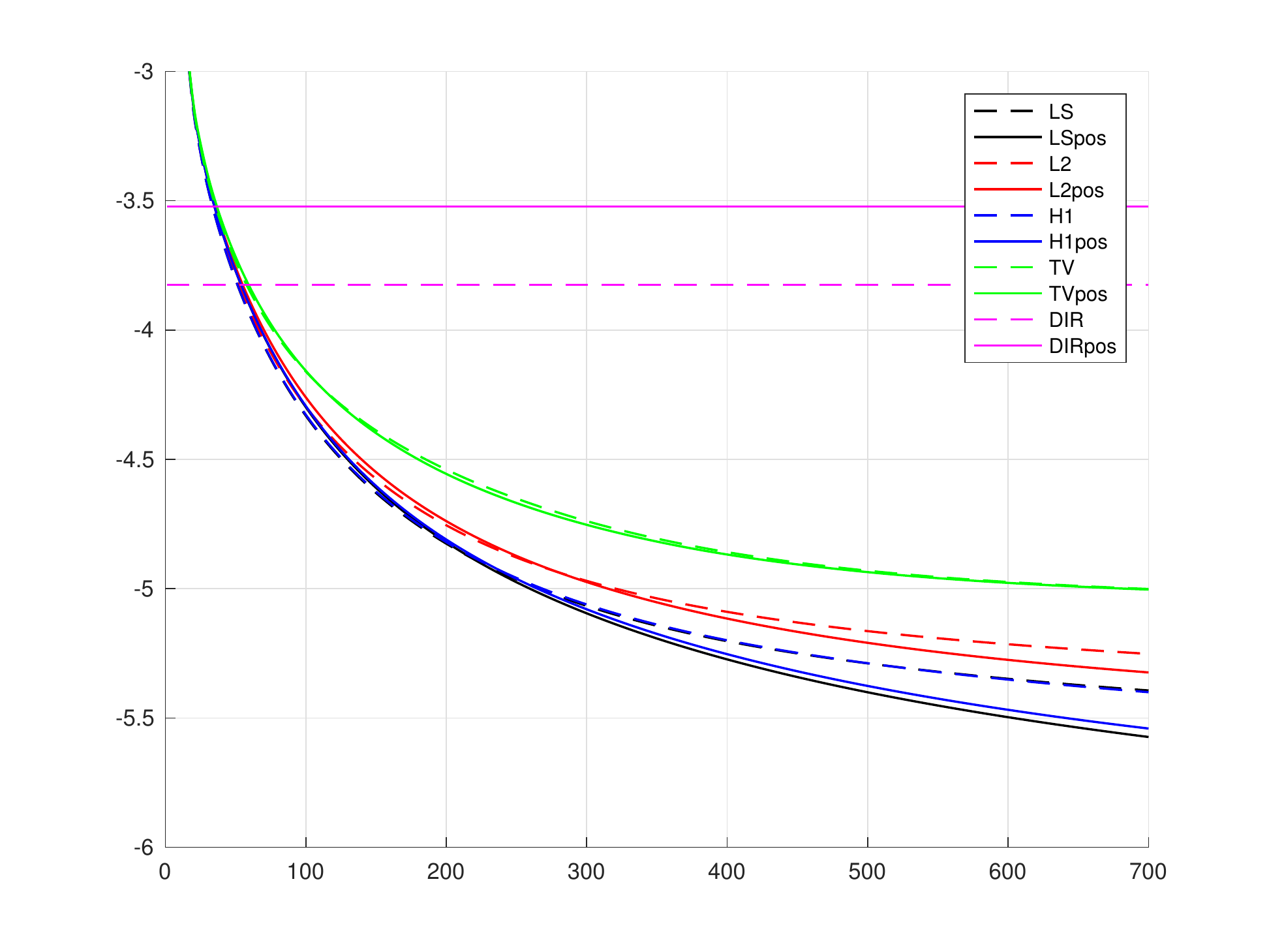}
\caption{\textbf{Reconstruction errors and residuals from exact data.}
Left: Logarithm $\log_{10} E^2(\fnum_j)$ of squared  reconstruction error.
Right: Logarithm $\log_{10} R^2(\fnum_j)$ of squared  residuals.
}\label{fig:errE}
\end{figure}

\subsubsection*{Reconstruction from exact data}

We first investigate the variational regularization methods on simulated
data without noise added. We consider  $L^2$-regularization,  $H^1$-regularization and TV-regularization.
All  methods are used  with and without positivity constraint. 
 For comparison purposes, we also  perform reconstructions using plain least squares,  positivity constrained least squares and the direct Fourier reconstruction method from  \cite{haltmeier2017inversion}. 
 The  regularization parameter has been taken $\alpha = 0.002$ for
 TV and $H^1$ regularization and   $\alpha = 0.01$ for $L^2$ regularization. 
  The reconstruction results  after
 700 iterations are  shown in Figure~\ref{fig:recE}. Total variation minimization
 (with and  without positivity constraint) clearly outperforms all other methods.
 In  particular, $L^2$- as well as $H^1$-regularization  contain a ghost source
 corresponding to the big disc, which is not contained in the TV-reconstruction.
Using the  least squares  methods, the ghost image is also  not that severe.

To quantitatively evaluate the reconstructions, we compute the
relative squared $\ell^2$-error and relative squared $\ell^2$-residuals, respectively,
\begin{align}
E^2(\fnum_j)
&\coloneqq
\frac{\sum_{i_1,i_2}\abs{\fnum_j[i_1,i_2]-\fnum[i_1,i_2]}^2 }{\sum_{i_1,i_2}\abs{\fnum[i_1,i_2]}^2}
\\
R^2(\fnum_j)
&\coloneqq
\frac{\sum_{k,\ell}\abs{\Cnum \fnum_j[k,\ell]-\gnum[k,\ell]}^2 }{\sum_{k,\ell}\abs{\gnum[k,\ell]}^2} \,.
\end{align}
Logarithmic plots  of $E^2(\fnum_j) $ and $R^2(\fnum_j) $ are shown in
Figure \ref{fig:errE}. The reconstruction error for the TV-regularization is much smaller than for all
other methods.
The residuals, as expected, are smallest for plain   least squares.
As a consequence of the ill-posedness, this  does not imply small reconstruction error.
In fact, the least squares methods  show  a slight semi-convergence behavior
which is due  data error introduced by the numerical implementation and the ill-posedness
of the inverting the attenuated V-line transform.

\begin{figure}\centering
\includegraphics[width=0.35\textwidth]{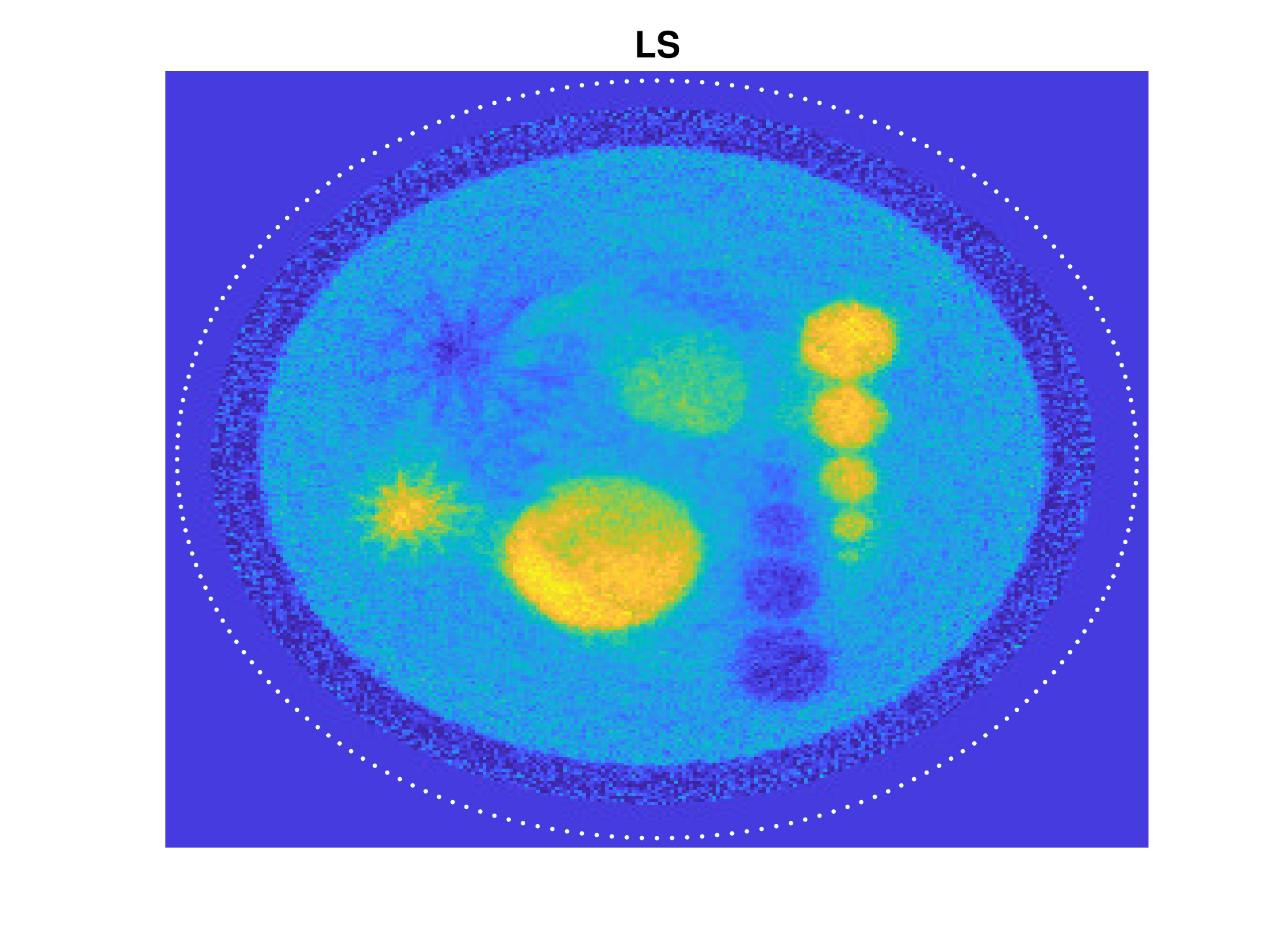}
\includegraphics[width=0.35\textwidth]{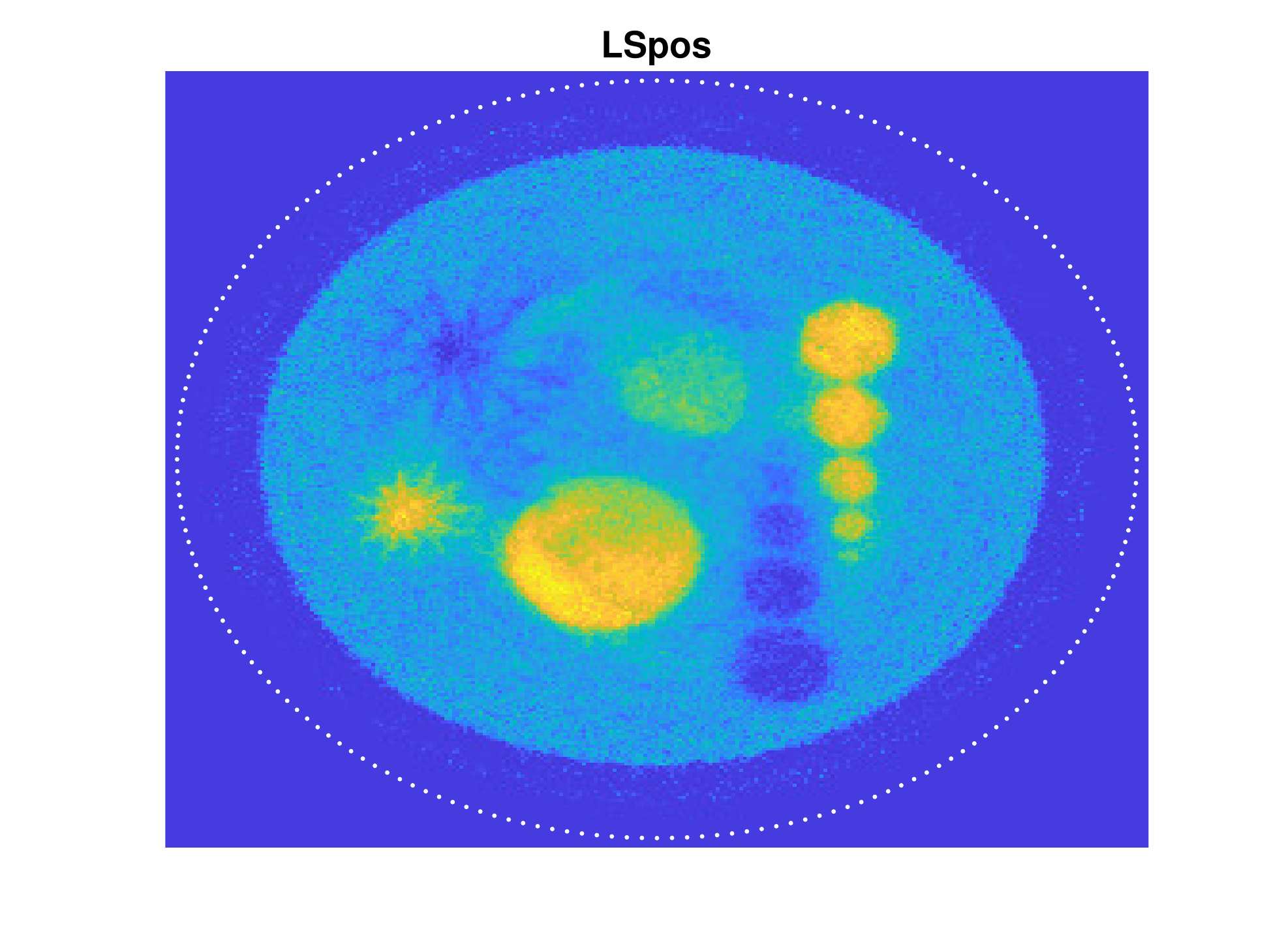}
\includegraphics[width=0.35\textwidth]{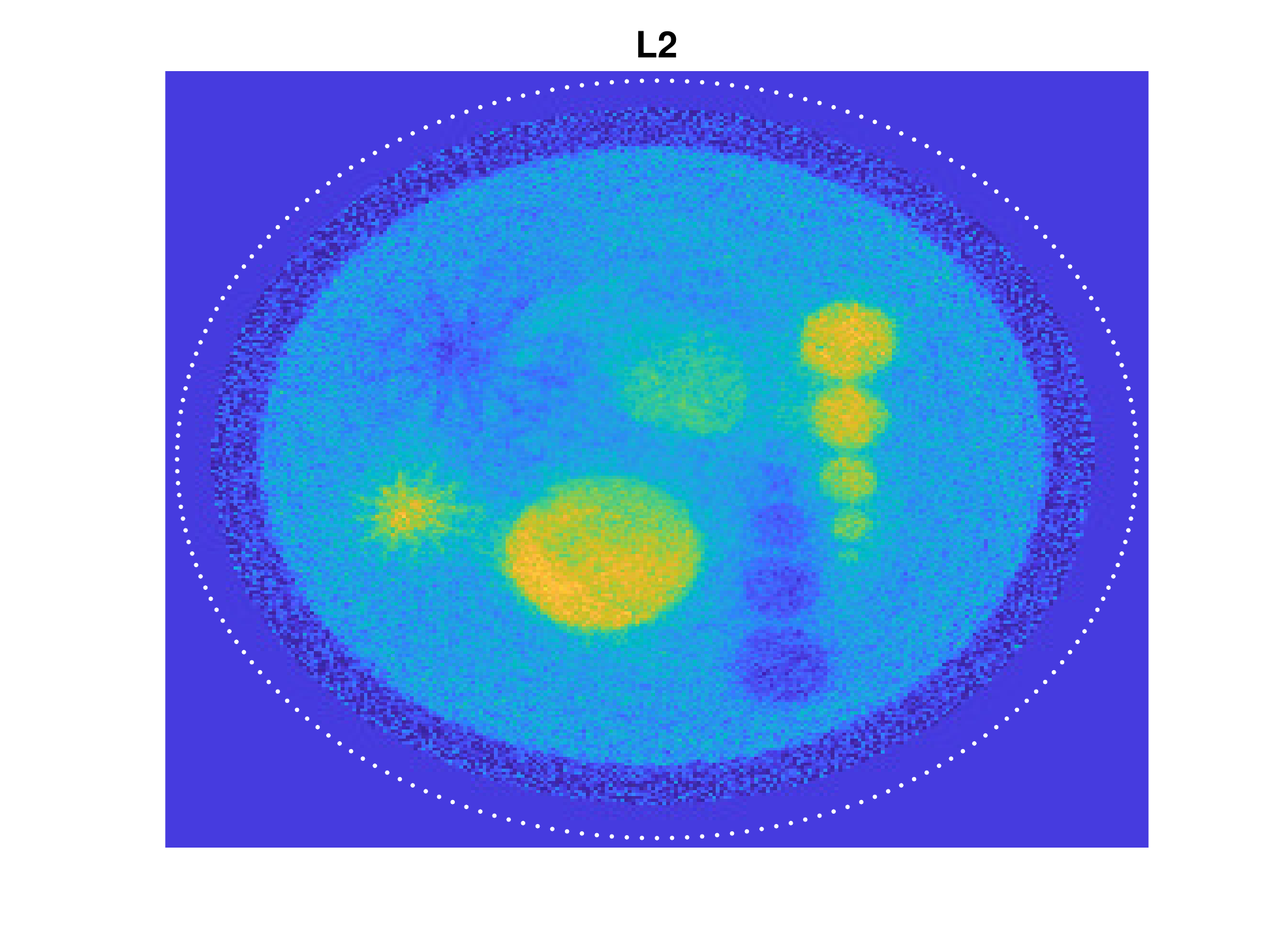}
\includegraphics[width=0.35\textwidth]{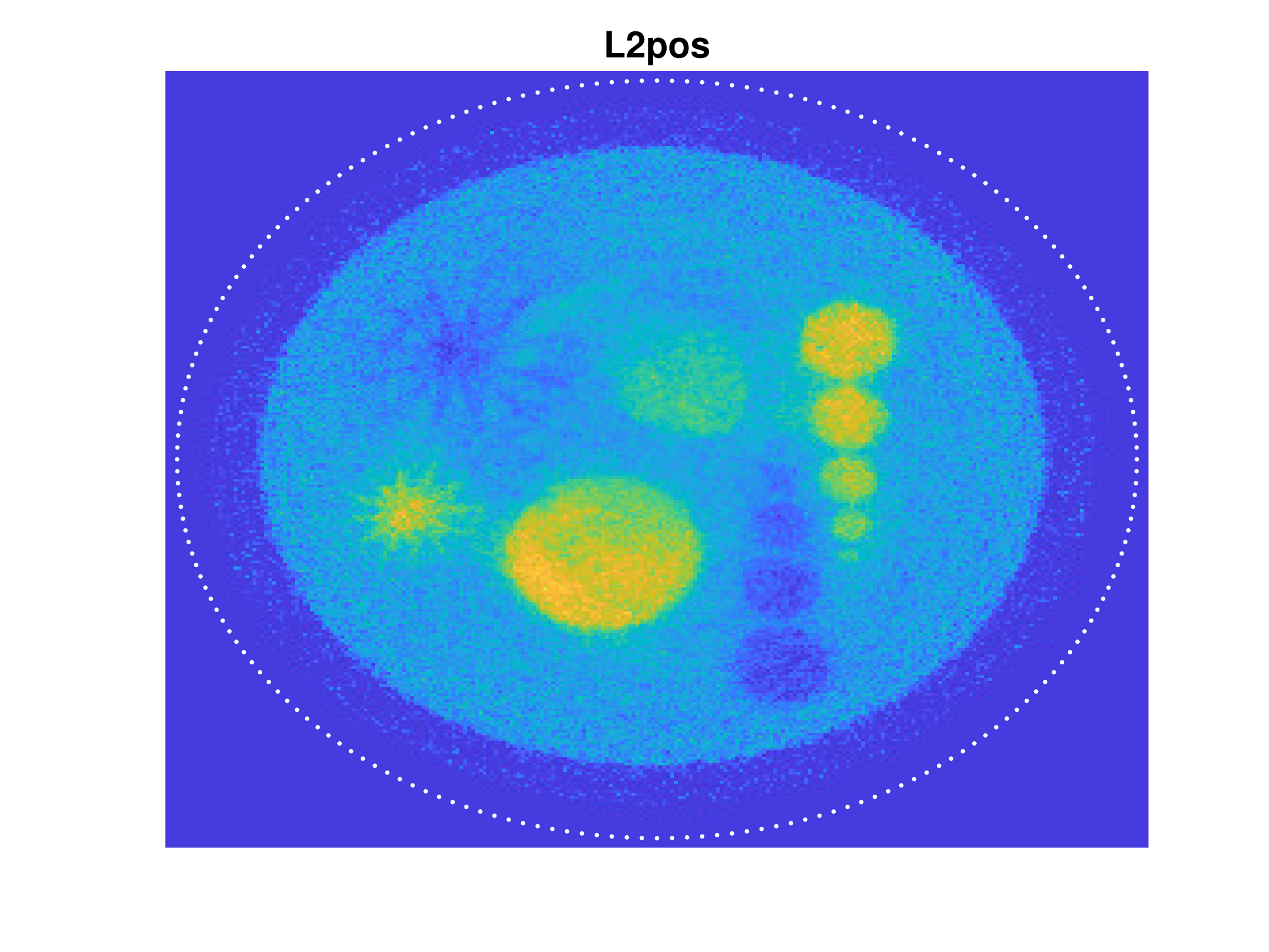}
\includegraphics[width=0.35\textwidth]{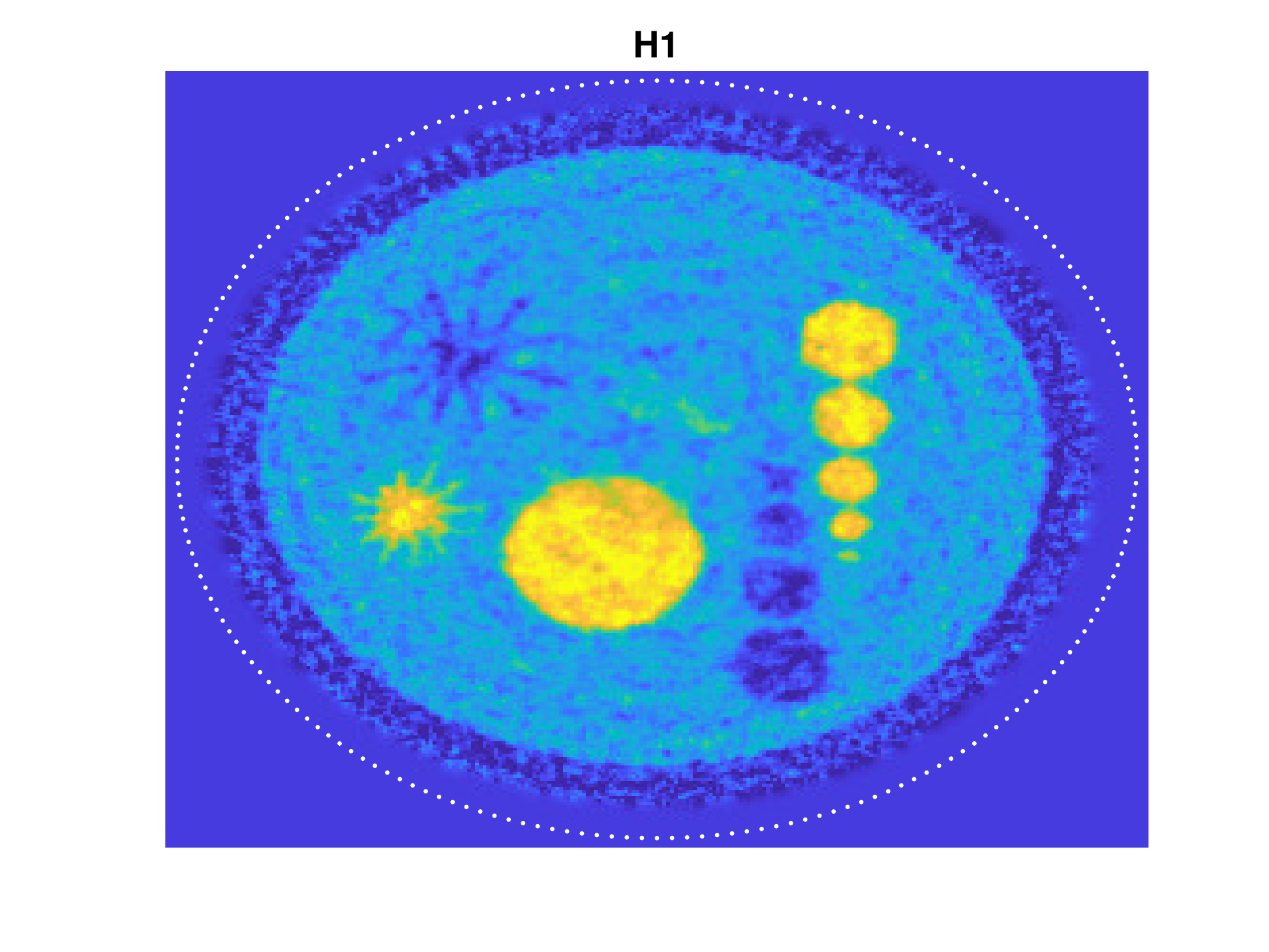}
\includegraphics[width=0.35\textwidth]{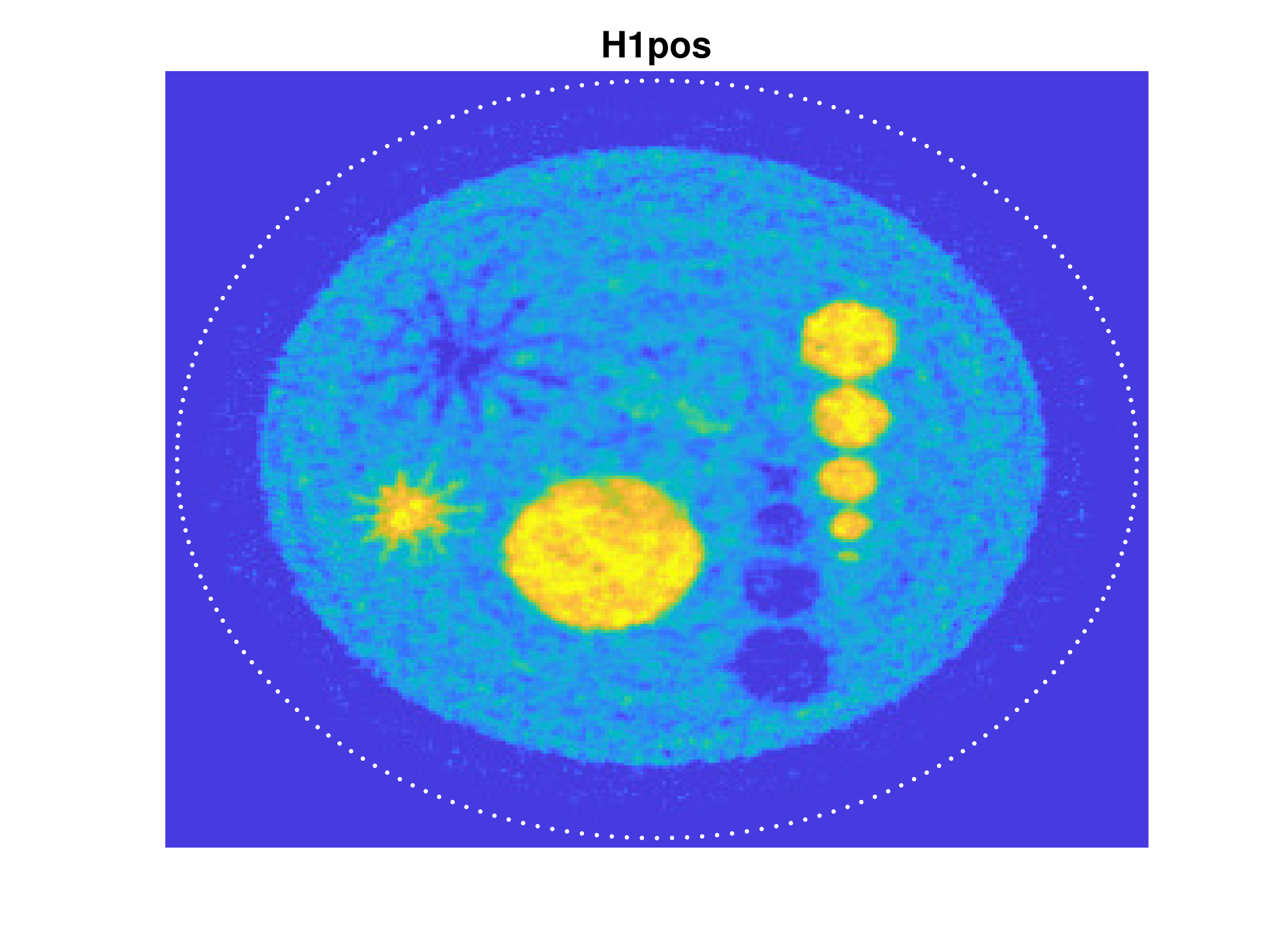}
\includegraphics[width=0.35\textwidth]{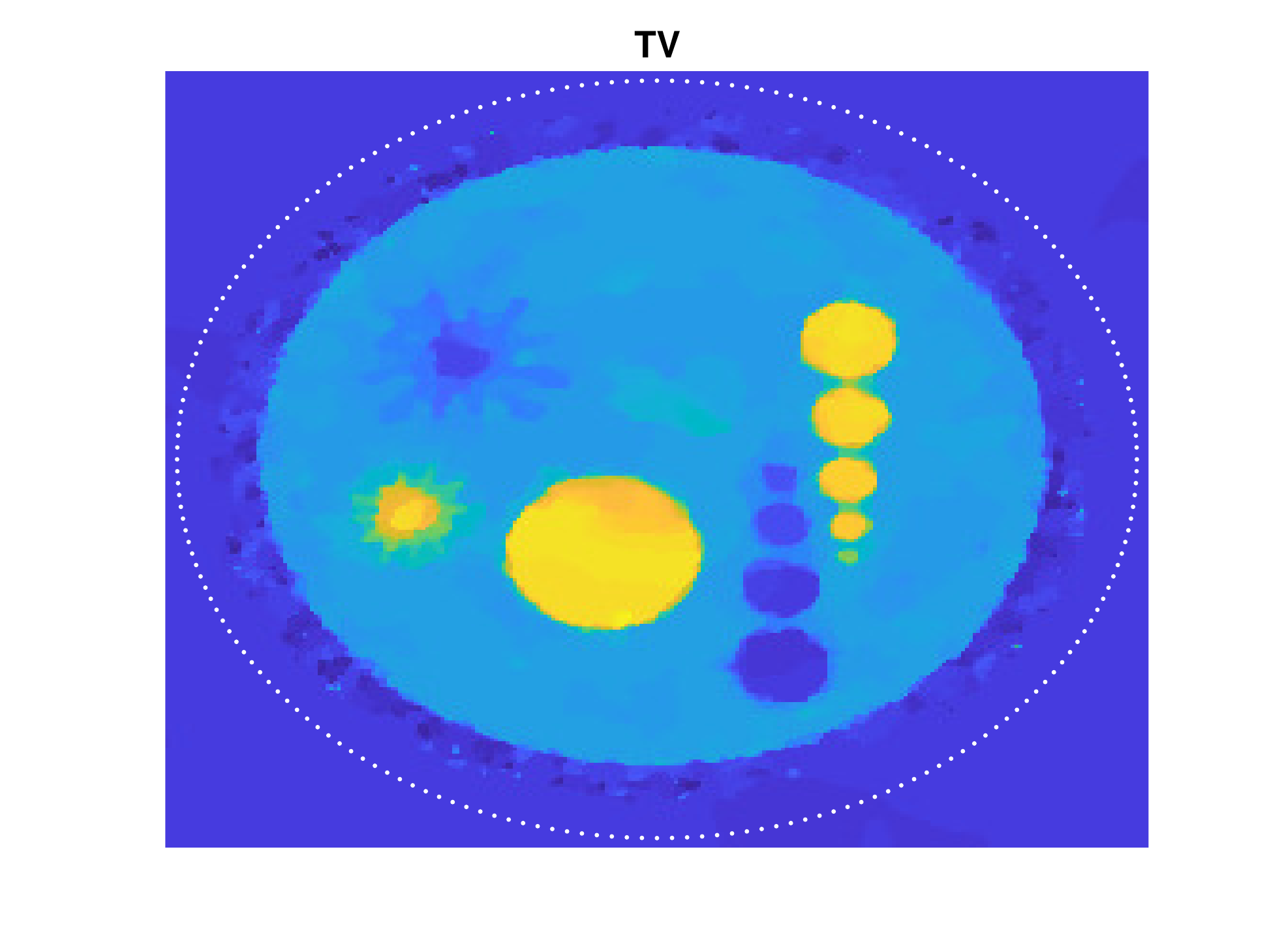}
\includegraphics[width=0.35\textwidth]{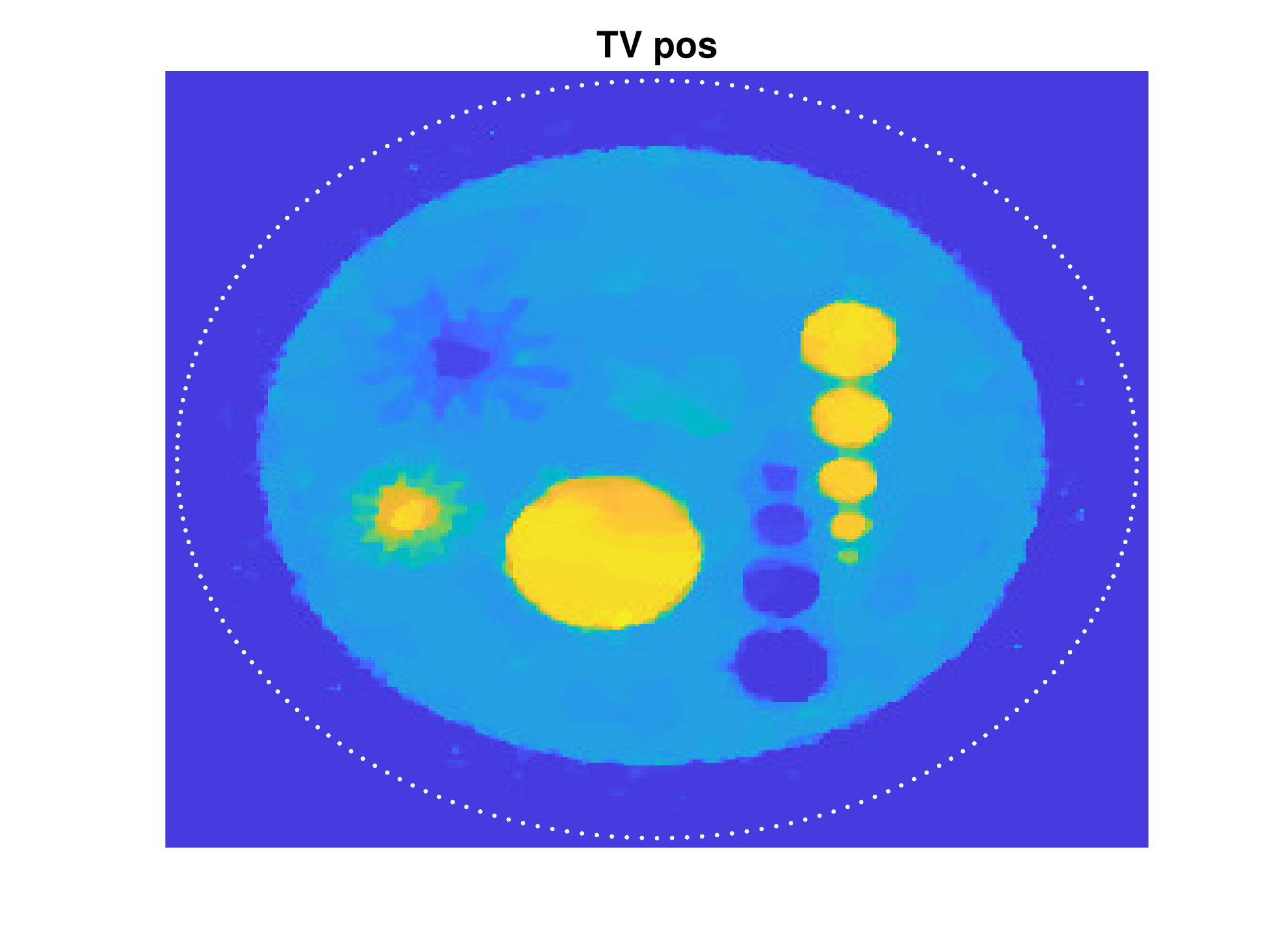}
\includegraphics[width=0.35\textwidth]{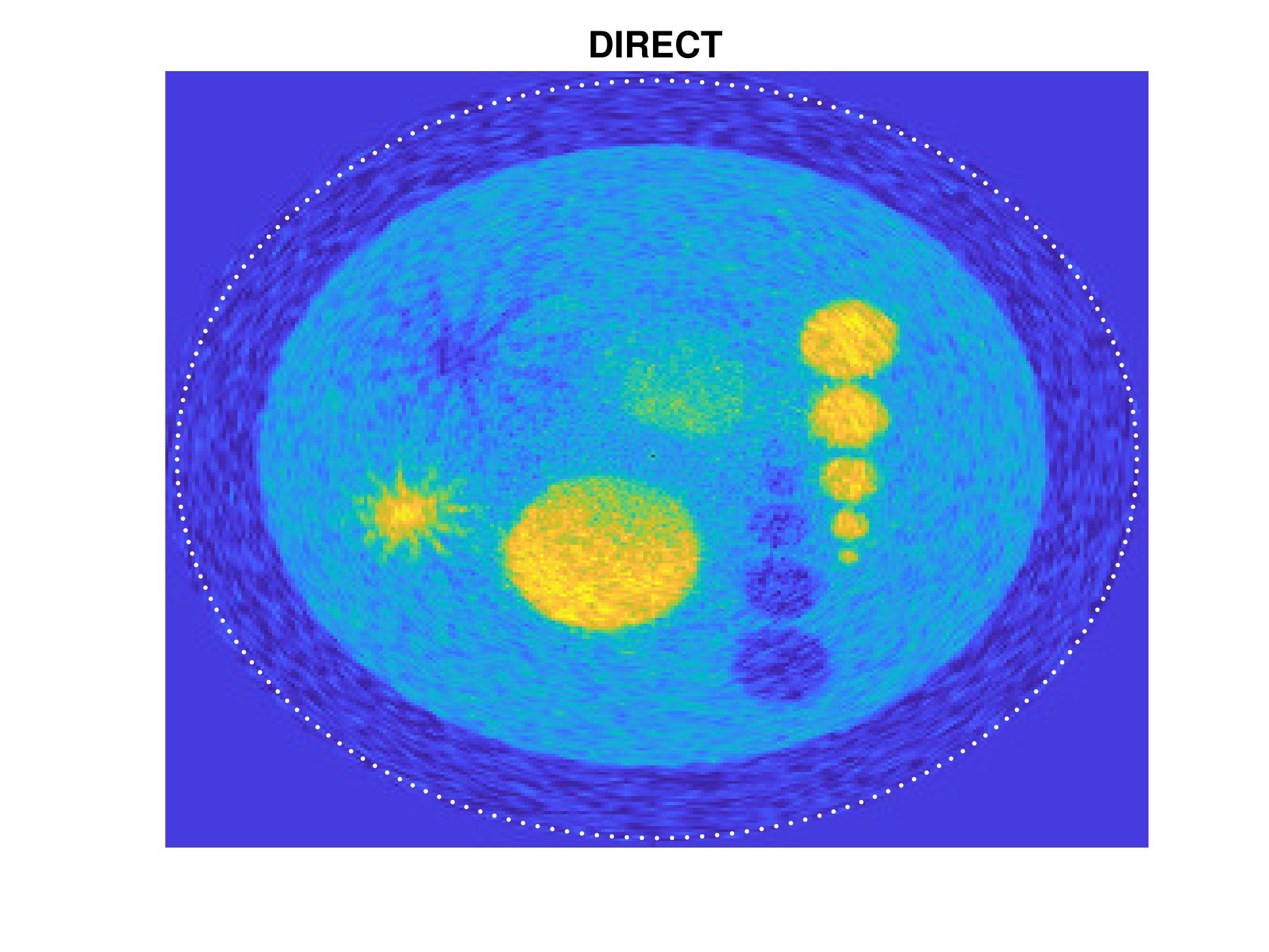}
\includegraphics[width=0.35\textwidth]{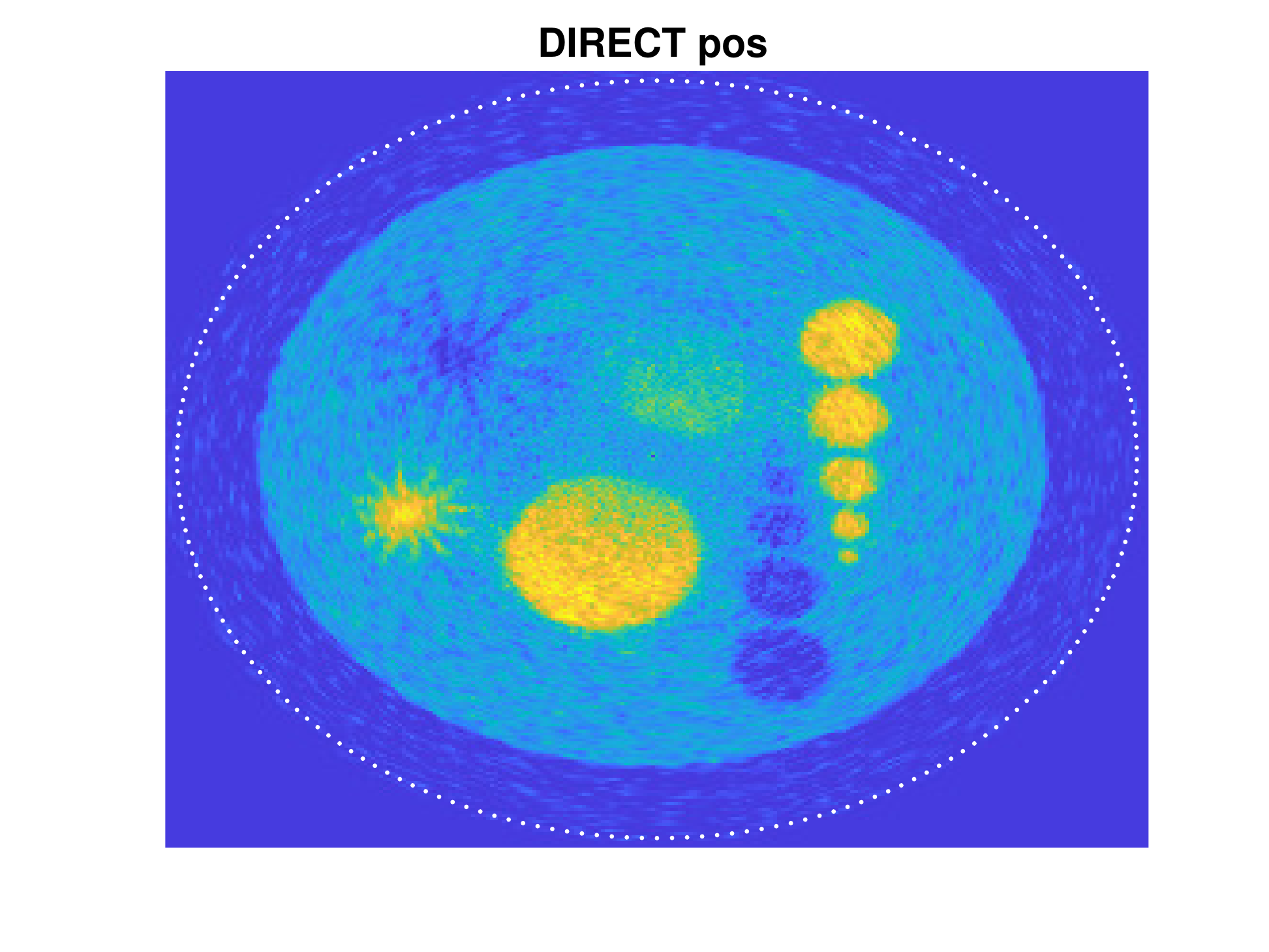}
\caption{\textbf{Reconstructions from noisy data.}
Top left: least squares.
Top right: positivity constraint least squares.
Line 2, left: $L^2$-regularization.
Line 2, right: $L^2$-regularization with positivity constraint.
Line 3, left: $H^1$-regularization.
Line 3, right: $H^1$-regularization with positivity constraint.
Line 4, left: TV-regularization.
Line 4, right: TV-regularization with positivity constraint.
Bottom,  left: Fourier reconstruction from  \cite{haltmeier2017inversion}.
Bottom,  right: Nonnegative part of Fourier reconstruction.}
\label{fig:recN}
\end{figure}

\begin{figure}\centering
\includegraphics[width=0.45\textwidth]{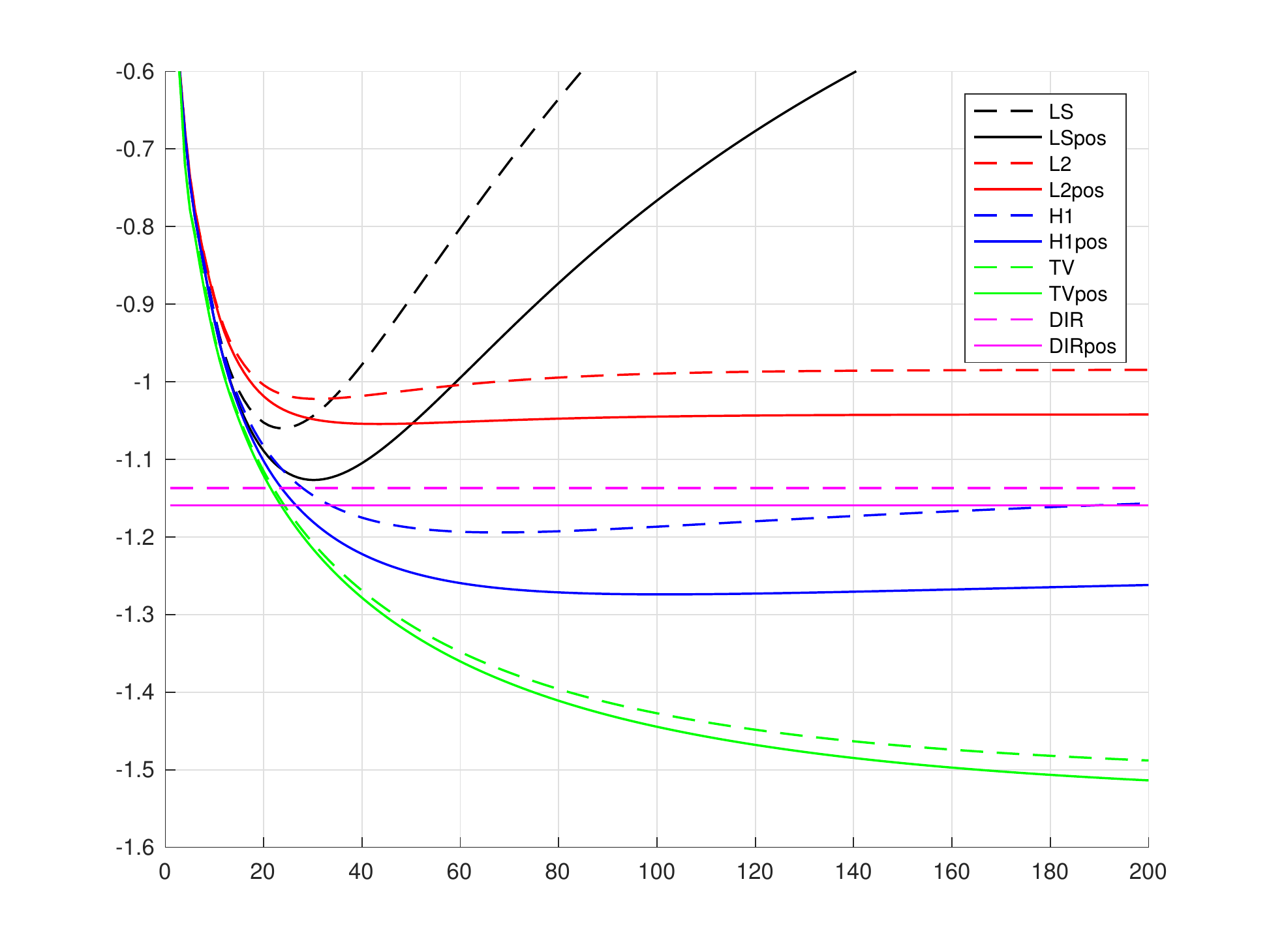}
\includegraphics[width=0.45\textwidth]{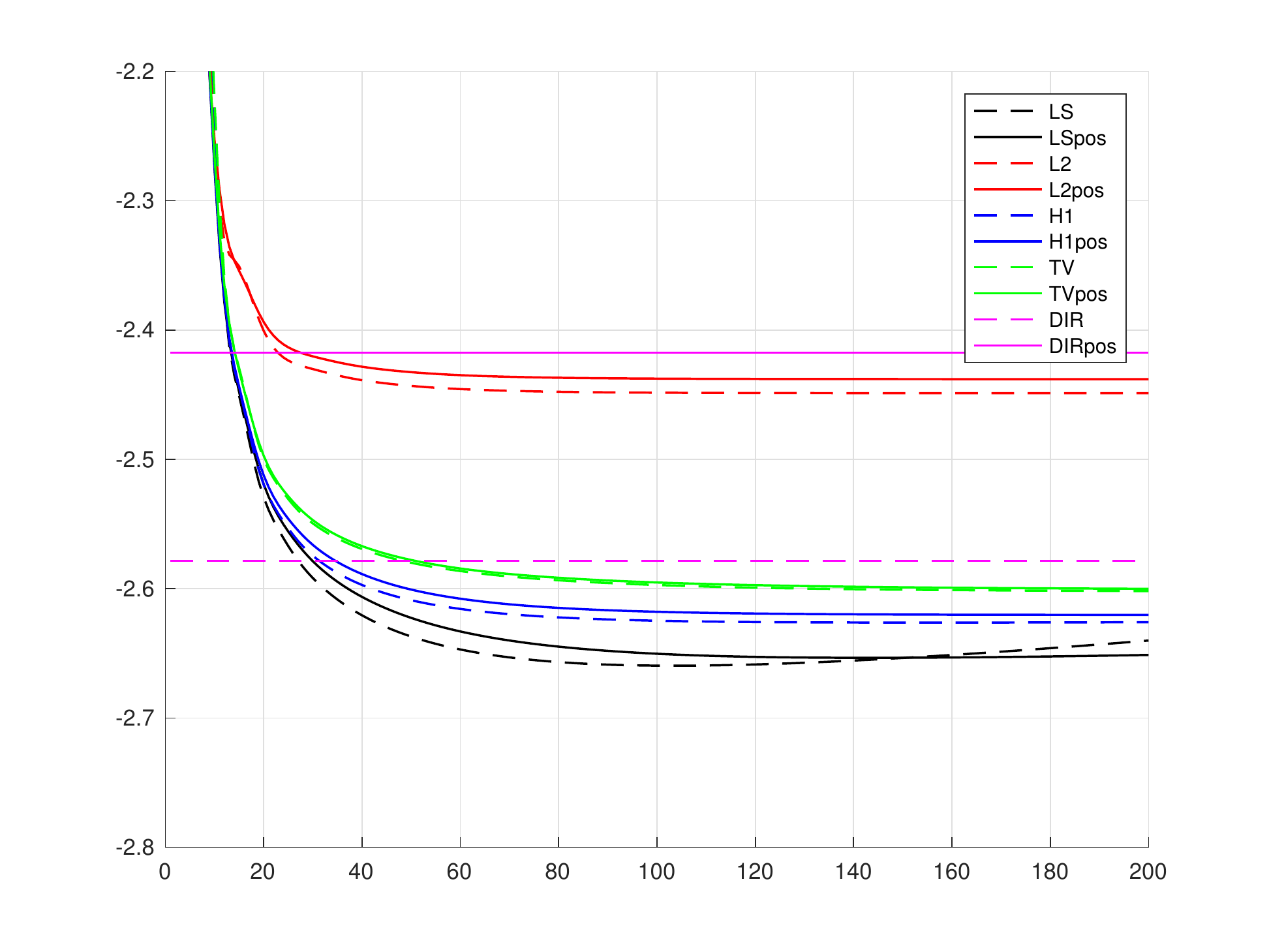}
\caption{\textbf{Reconstruction errors and residuals from noisy data.}
Left: Logarithm $\log_{10} E^2(\fnum_j)$ of squared  reconstruction error.
Right: Logarithm $\log_{10} R^2 (\fnum_j)$ of squared  residuals.}\label{fig:errN}
\end{figure}

\subsubsection*{Reconstruction from noisy  data}

To test the algorithms in a more realistic situation, we  repeated the
simulation studies where  we added additive Gaussian noise $\noisenum
\in \R^{(P+1)\times Q}$  to the simulated data.
The  relative $\ell^2$-error
\begin{equation*}
\delta = \frac{\sqrt{\sum_{k,\ell}\abs{\noisenum[k,\ell]}^2 }}{\sqrt{\sum_{k,\ell}\abs{\gnum[k,\ell]}^2}}
\end{equation*}
 is approximately $5 \%$.
 We again consider  $L^2$-regularization,  $H^1$-regularization, TV-regularization,
 least-squares  minimization and the Fourier reconstruction method from  \cite{haltmeier2017inversion}, where each method is used with and without positivity constraint.
 In order to stabilize the reconstruction, the regularization parameter  had to be increased
 for all methods. It has been taken  $\la = 0.14$ for $L^2$-regularization,
 $\la = 0.06$ for $H^1$-regularization, and  $\la = 0.015$ for TV-regularization.

 Reconstruction results from noisy data
 are shown in Figure~\ref{fig:recN}. We used 200 iterations
 for the regularized iterations  and  stopped the least squares iterations
 after 15 iterations in order to prevent overfitting. Logarithmic plots  of the
 squared relative $\ell^2$-error $E^2(\fnum_j) $ and the squared relative $\ell^2$-residual
 $R^2(\fnum_j) $ are shown in Figure \ref{fig:errN}. As can be seen from the reconstruction as well as the
 evolution of the reconstruction error, TV-regularization again performs
 best for the considered type of phantom. In particular, in all other methods,
 the  ghost image is clearly visible and the reconstruction error is much larger than
 for TV-regularization.

\section{Conclusion and outlook}
\label{sec:conclusion}

In this paper, we investigated
variational  regularization  methods for  the stable inversion of the conical Radon
transform with vertices on the sphere.
We presented  convergence results of variational  regularization
and a numerical minimization algorithm based on the
Chambolle-Pock primal dual algorithm  \cite{chambolle2011,sidky2012convex}.
In particular, the algorithm framework allows a TV-regularizer, quadratic penalties,
positivity  constraint as well as their combinations.
In terms of relative $\ell^2$-reconstruction error, we found TV-regularization
to clearly outperform least squares
quadratic-regularization (all with or without positivity constraint).

The results in this paper show that compared to quadratic regularization, the use of
non-smooth convex regularization can  clearly  improve reconstruction of the conical Radon transforms.
Several interesting generalizations of variational regularization are possible.  First, one could replace the sphere with more general
surfaces of vertices  and allow different axis for each  vertex location.
Second, the algorithmic framework can be extended to variable axis on the sphere.
The transform then depends on the $2n-1$ independent variables and is therefore
highly overdetermined.  In applications such as  emission tomography with Compton camera,
the noise level is usually high   and the full overdetermined transform must
actually be used for image reconstruction. Moreover, noise statistics as well as factors  governing system performance
such as Doppler broadening or polarization  \cite{rogers2004compton}  should be included
in the forward  and  adjoint  problem. Resulting algorithms based on
conical Radon transforms (which require  binning of Compton camera raw  data)
should be compared with  list mode EM reconstruction
algorithms  \cite{wilderman2001improved} that work without data binning.

Besides numerical investigations, several theoretical aspects have to be
addressed in future work. This includes stability and uniqueness analysis
as well as range descriptions of the various forms of conical
Radon transforms.

\section*{Acknowledgement}

The work of Markus Haltmeier  is supported through the Austrian Science Fund (FWF), project P 30747-N32.

\end{document}